\newtheorem{remark}{Remark}[section]
\newtheorem{prop}{Proposition}[section]
\newtheorem{defi}{Definition}[section]
\newtheorem{defs}{Definitions}[section]
\newtheorem{lemma}{Lemma}[section]
\newtheorem{theorem}{Theorem}[section]
\newtheorem{assumptions}{Assumptions}[section]
\def\ds{\displaystyle}
\def\Rset{\mathbb{R}}
\def\one{\mathbbm{1}}
\def\rk{\mbox{\em rk}\,}
\def\Re{\mbox{\em Re}\,}
\def\diag{\mbox{\em diag}\,}
\def\vand{\mbox{\em Vand}\,}
\title{Equivalence of finite dimensional input-output models of solute
  transport and diffusion in geosciences}
\author{{\sc A.~Rapaport}\footnote{INRA, UMR MISTEA,
    Montpellier, France.
E-mail: {\tt\small alain.rapaport@montpellier.inra.fr}},
{\sc A.~Rojas-Palma}\footnote{Departamento de Ingeniería Matemática \&
  Centro de Modelamiento Matemático (UMI 2807, CNRS), Universidad de
  Chile, Beauchef 851, Casilla 170-3, Santiago 3, Chile and UMR
  Inra-Supagro MISTEA, Montpellier, France.
E-mail: {\tt\small arojas@dim.uchile.cl}},
{\sc J.R.~de~Dreuzy}\footnote{CNRS, UMR G\'eosciences, Rennes, France.  E-mail: {\tt\small
    jean-raynald.de-dreuzy@univ-rennes1.fr}}
  and {\sc H.~Ram\'irez C.}\footnote{Departamento de Ingeniería Matemática \& Centro de Modelamiento Matemático (UMI 2807, CNRS), Universidad de Chile, Beauchef 851, Casilla 170-3, Santiago 3, Chile. E-mail: {\tt\small hramirez@dim.uchile.cl}}
}
\date{\today}
\begin{document}

\maketitle

\begin{abstract}
We show that for a large class of finite dimensional input-output
positive systems that represent networks of transport and diffusion of solute in
geological media, there exist equivalent {\em
  multi-rate mass transfer} and {\em multiple interacting continua} representations, which are quite
popular in geosciences. Moreover, we provide explicit methods to construct these
equivalent representations.
The proofs show that controllability property is playing a crucial
role for obtaining equivalence. These results contribute to our
fundamental understanding on the effect of fine-scale geological
structures on the transfer and dispersion of solute, and, eventually,
on their interaction with soil microbes and minerals. \\

\noindent {\bf Key-words.} Equivalent mass transfer models, positive linear
systems, controllability.\\

\noindent {\bf AMS subject classifications.} 93B17, 93B11, 15B48, 65F30.\\

\end{abstract}

\section{Introduction}
Underground media are characterized by their high surface to
volume ratio and by their slow solute movements that overall
promote strong water-rock interactions \cite{SDL05,VW76}. As a result, water quality strongly evolves with the degradation of anthropogenic contaminants and the dissolution of some minerals. Chemical reactivity is first determined by the residence time of solutes and the input/output behavior of the system, as most reactions are slow and kinetically controlled \cite{SM09,M11}. Especially important are exchanges between high-flow zones where solutes are transported over long distances with marginal reactivity and low-flow zones in which transport is limited by slow diffusion but reactivity is high because of large residence time \cite{HG95,CSBMGG98}. It is for example the case in fractured media where solute velocity can reach some meters per hour in highly transmissive fractures \cite{D86,F08} but 
 remains orders of magnitude slower in neighboring pores and smaller fractures giving rise to strong
dispersive effects \cite{G93,GWR92}.
More generally  wide variability of transfer times, high dispersion, and direct interactions
between slow diffusion in small pores and fast advection in much
larger pores are ubiquitous in soils and aquifers \cite{CS64}.
They are also the most characteristic features of underground
transport as long as it remains conservative (non-reactive). 
The dominance of these characteristic features up to some meters to
hundreds of meters have prompted the development of numerous
simplified models starting from the double-porosity concept \cite{WRA63}.

In double-porosity models, solutes move quickly by advection in a
first homogeneous porosity with a small volume representing focused
fast-flow channels and slowly by diffusion in a second large
homogeneous porosity. Exchanges between the two porosities is
diffusion-like, i.e. directly proportional to the differences in
concentrations. Such models have been widely extended to account not
only for one diffusive-like zone but for many of them with different
structures and connections to the advective zone \cite{HG95,PN85}.
Such extensions are thought to model both the widely varying transfer
times and the rich water-rock interactions. The two most famous ones are the
Multi-Rate Mass Transfer model (MRMT) \cite{CSBMGG98,HG95} and
Multiple INteracting Continua model (MINC).
They are made up of an infinity of diffusive zones deriving from
analytic solutions of the diffusion equation in layered, cylindrical
or spherical impervious inclusions (MRMT) or in series (MINC).
Between the single and infinite diffusive porosities of the
dual-porosity and these models, many intermediary models with finite
numbers of diffusive porosities have been effectively used and
calibrated on synthetic, field, or experimental data showing their
relevance and usefulness  \cite{DRBH13,BDC15,MMH01,WCSSD10,ZMHHPS04}.

Theoretical grounds are however missing to identify classes of
equivalent porosity structures, effective calibration capacity on
accessible tracer test data, and influence of structure on
conservative as well as chemically reactive transport. 
One can then naturally wonder which representation suits the best
experimental data, and if the two particular MRMT and MINC models are
not two restrictive structures. This is exactly the problem we
address in this work, from a theoretical approach based on linear algebra.

More precisely, we study the equivalence problem for 
a wide class of network structures and provide necessary and
sufficient conditions, making explicit the mathematical proofs.
We stick to the framework of stationary flows (in the mobile zone) and
assume water saturation in the immobile zones.
More concretely, we consider a system of $n$ compartments interconnected by
diffusion, whose water volumes $V_{i}$ ($i=1\cdots n)$ are assumed to
be constant over the time. One reservoir is subject to an advection of a
solute. We shall called {\em mobile zone} this particular reservoir,
and all the others $n-1$ reservoirs will be called {\em immobile
  zones} (see Fig. \ref{fig-network}). 
\begin{figure}[h]
\begin{center}
\includegraphics[width=8cm]{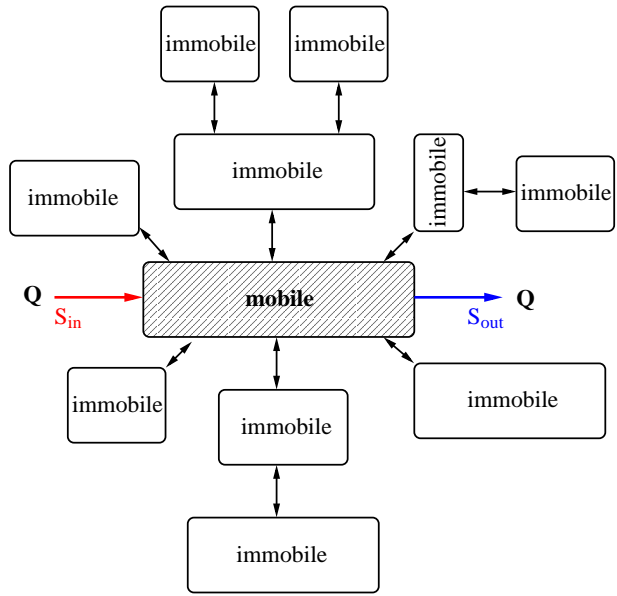}
\caption{\label{fig-network}Example of a network with one {\em mobile} zone}
\end{center}
\end{figure}

We aim at describing the time evolution of the concentrations $S_{i}$
($i=1\cdots n)$ of the solute in the $n$ tanks. The solute is injected
in tank $1$ with a water flow rate $Q$ at a concentration $S_{in}$, and
withdrawn from the same tank $1$ at the same water flow rate $Q$ with
a concentration $S_{out}=S_{1}$. Thus, the tank $1$ plays the role of
the {\em mobile zone}.
We represent this system by a system of $n$ ordinary equations:
\[
\begin{array}{rll}
\dot S_{1} & = & \ds \frac{Q}{V_{1}}(S_{in}-S_{1})+\sum_{j=2}^n
\frac{d_{1j}}{V_{1}}(S_{j}-S_{1})\\
\vdots & & \vdots\\
\dot S_{i} & = & \ds \sum_{j\neq i}\frac{d_{ij}}{V_{i}}(S_{j}-S_{i})\\
\vdots & & \vdots\\
\end{array}
\]
where the parameters $d_{ij}=d_{ji}$ ($i\neq j)$ denote the
diffusive exchange rates of solute between reservoirs $i$ and $j$.
For sake of simplicity, we shall assume
\[
\frac{Q}{V_{1}}=1
\]
which is always possible by a change of the time scale of the dynamics.
In the following we adopt an input-output setting in matrix form:
\begin{equation}
\label{input-output}
\begin{array}{lll}
\dot X & = & AX+Bu\\
y & = & CX
\end{array}
\end{equation}
where $X$ denotes the vector of the concentrations $S_{i}$ ($i=1\cdots
n$), $u$ the {\em input} that is $u=S_{in}$ and $y$ the {\em output}
$y=S_{out}=S_{1}$. The column and row matrices $B$ and $C$ are as follows
\[
B=\left[\begin{array}{c} 
1\\ 0\\ \vdots\\ 0
\end{array}\right] \quad \mbox{and} \quad
C=\left[\begin{array}{cccc} 
1 & 0 & \dots & 0
\end{array}\right] \ 
\]
and the matrix $A$ satisfies the following properties.\\

\begin{assumptions}
\label{H1}
There exist matrices $V$ and $M$ such that
\[
A=-BB^{t}-V^{-1}M
\]
where $V$ is a positive diagonal matrix and $M$ is a symmetric matrix that fulfills
\begin{enumerate}[label=\roman{*}.]
\item $M$ is irreducible (i.e. the graph with nodes $P_{i}$,
  and edges $\overrightarrow{P_{i}P_{j}}$ when $M_{ij}\neq 0$  is
  strongly connected)
\item $M_{ii}>0$ for any $i$
\item $M_{ij}\leq 0$ for any $i\neq j$
\item $\sum_{j}M_{ij}=0$ for any $i$
\end{enumerate}
\end{assumptions}

The diagonal terms of the matrix $V$ represent the volumes of the $n$
zones, and the off diagonal terms $M_{i,j}$ of the matrix $M$ are
the (opposite) of the diffusive exchange rate parameters between zones $i$
and $j$ (equal to $0$ if $i$ is not directly connected to $j$).
Properties i. and iv. are related to the connectivity of the graph
between zones and the mass conservation (i.e. Kirchoff's
law).
One can proceed to the following reconstruction of matrices $V$ and
$M$ from a given matrix $A$ that fulfills Assumptions \ref{H1}.

\begin{lemma}
\label{lemmaV}
Let $A$ fulfills Assumptions \ref{H1}. For any $i=2 \cdots n$,
there exists a permutation $\pi$ of $\{1,\cdots, n\}$ and an integer $l<n$ such that
$A_{\pi(j),\pi(j+1)}\neq 0$ $(j=1 \cdots l)$ with $\pi(1)=1$ and $\pi(l+1)=i$. 
Define then the numbers
\begin{equation}
\label{calculV}
V_{\pi(j+1)}=V_{\pi(j)}\frac{A_{\pi(j+1)\pi(j)}}{A_{\pi(j)\pi(j+1)}}
\quad j=1 \cdots l
\end{equation}
with $V_{1}=1$. Then $V$ is the diagonal matrix with $V_{1},\cdots,
V_{n}$ as diagonal entries, and $M=-V(A+BB^t)$.
\end{lemma}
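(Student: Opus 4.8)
The plan is to exploit the symmetry of $M$, which imposes a detailed-balance relation on the off-diagonal entries of $A$, and then to propagate it along a path in order to recover the volumes. Fix any admissible decomposition $A=-BB^t-\bar V^{-1}\bar M$ provided by Assumptions \ref{H1}, with $\bar V$ a positive diagonal matrix and $\bar M$ symmetric. Since $BB^t$ is supported only on the $(1,1)$ entry, for every pair $i\neq j$ one has $A_{ij}=-\bar M_{ij}/\bar V_i$; in particular the off-diagonal zero-pattern of $A$ coincides with that of $\bar M$, and $\bar M_{ij}=-\bar V_iA_{ij}$. Symmetry $\bar M_{ij}=\bar M_{ji}$ then yields the detailed-balance relation $\bar V_iA_{ij}=\bar V_jA_{ji}$ for all $i\neq j$, equivalently a ratio $\bar V_j/\bar V_i$ pinned down by $A_{ij}$ and $A_{ji}$ whenever these entries do not vanish.

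Next I would establish the existence of the path. As $\bar M$ is symmetric its graph is undirected, and irreducibility (property i.) means it is connected; because the off-diagonal supports of $A$ and $\bar M$ agree, the graph built from the nonzero entries $A_{ij}$ is connected as well. Hence for each $i\ge 2$ there is a simple path from node $1$ to node $i$, that is, a permutation $\pi$ with $\pi(1)=1$, $\pi(l+1)=i$ and $A_{\pi(j)\pi(j+1)}\neq 0$ for $j=1,\dots,l$; being simple it visits at most $n$ vertices, so $l\le n-1<n$. By the matching of supports the reverse entries $A_{\pi(j+1)\pi(j)}$ are nonzero too, so every quotient entering the recursion (\ref{calculV}) is well defined.

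The core of the argument is that the outcome of the recursion does not depend on the chosen path. Feeding the detailed-balance relation into each successive edge, every factor in (\ref{calculV}) is a quotient of consecutive entries of the underlying volume vector $\bar V$, so the product accumulated from $\pi(1)=1$ to $\pi(l+1)=i$ telescopes to $\bar V_i/\bar V_1$, a number that depends only on the endpoints. This collapse of the product is the step I expect to require the most care, and it is exactly what the symmetry of $\bar M$ buys us: without it the edge ratios need not be consistent around cycles. Imposing the normalization $V_1=1$ then pins down $V_i=\bar V_i/\bar V_1$ unambiguously for every $i$.

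It remains to recover $M$. Multiplying the defining relation by $V$ gives $M=-V(A+BB^t)$, and since replacing $(\bar V,\bar M)$ by $(c\bar V,c\bar M)$ leaves $A$ unchanged for any $c>0$, the reconstructed pair is simply the admissible one rescaled so that its first volume equals $1$; concretely $V=\bar V/\bar V_1$ and $M=\bar M/\bar V_1$. All the properties required in Assumptions \ref{H1} --- symmetry, irreducibility, positivity of the diagonal, nonpositivity of the off-diagonal entries and vanishing row sums --- are invariant under multiplication by a positive scalar, so the recovered $M$ is again admissible, which closes the argument.
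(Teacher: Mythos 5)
Your proof is correct and takes essentially the same route as the paper's: the detailed-balance relation forced by the symmetry of $M$, a path from node $1$ supplied by irreducibility, the recursion along that path, and the reconstruction $M=-V(A+BB^t)$. Your two additions --- path-independence of the recursion via telescoping, and the check that the rescaled pair $(\bar V/\bar V_1,\bar M/\bar V_1)$ still satisfies Assumptions \ref{H1} --- are refinements of, not departures from, the paper's argument (which leaves both points implicit), and they genuinely strengthen the write-up.

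One point should be flagged. Your detailed-balance relation $\bar V_iA_{ij}=\bar V_jA_{ji}$ is the correct consequence of $A=-BB^t-\bar V^{-1}\bar M$, and it yields $V_{\pi(j+1)}=V_{\pi(j)}\,A_{\pi(j)\pi(j+1)}/A_{\pi(j+1)\pi(j)}$, i.e.\ the \emph{reciprocal} of formula (\ref{calculV}) as printed; the paper's statement (and the relation quoted in its own proof) has the two indices swapped. One can check this on the paper's Example 1: there $A_{13}=2$, $A_{31}=1$, and indeed $V_3=V_1A_{13}/A_{31}=2$, whereas (\ref{calculV}) taken literally would return $1/2$. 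Consequently your telescoping step, which asserts that the accumulated product equals $\bar V_i/\bar V_1$, holds for the corrected formula but not for (\ref{calculV}) as written, under which the product telescopes to $\bar V_1/\bar V_i$. This is a typo in the paper rather than a flaw in your reasoning, but your proposal silently mixes the two conventions --- you derive the correct relation and then assert the conclusion attached to the misprinted one --- so you should state the corrected version of (\ref{calculV}) explicitly and carry it through the telescoping argument.
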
 

\begin{proof}
Under Assumptions \ref{H1}, there exists for any $i\neq 1$ a path from
node $1$ to $i$ in the direct graph associated to the matrix $A$,
visiting nodes only once, for which one can associate a permutation
$\pi$ such that $\pi(1)=1$, $\pi(l+1)=i$ with $A_{\pi(j)\pi(j+1)}\neq
0$ for $j=1\cdots l$, where $l<n$ is the path length.
Furthermore, one has (by Assumptions \ref{H1})
\[
\frac{A_{\pi(j)\pi(j+1)}}{V_{\pi(j)}}=\frac{A_{\pi(j+1)\pi(j)}}{V_{\pi(j+1)}}
    , \quad j=1,\cdots,l .
\]
One can then determine recursively the volumes
$V_{\pi(1)}\cdots V_{\pi(l+1)}$ 
with the expression (\ref{calculV})
from $V_{\pi(1)}=V_{1}$ that can be chosen equal to $1$.
From the diagonal matrix $V$ whose diagonal entries are the (non-null)
volumes $V_{1} \cdots V_{n}$, one can then reconstruct the matrix
$M=-V(A+BB^t)$.
\end{proof}

Matrices $A$ that fulfill Assumption \ref{H1} are compartmental
matrices, that have been extensively studied in
the literature (see for instance \cite{JS93,WC99}).
In the present work, we focus on properties for the
specific structure of compartmental matrices that we consider.
We first define in Section  \ref{section-configs} the two particular structures denoted
MRMT and MINC, and give some of their properties.
In Sections \ref{secMRMT} and
\ref{secMINC} we state and prove our main results about the equivalence of any
network structure with these two particular structures, under Assumption \ref{H1}
and an additional condition about the controllability.
Section \ref{section-minimal} discuss the crucial role played by
this controllability assumption to obtain the equivalence.
Finally, we draw conclusions with insights for geosciences.

\section{Notations and preliminary results}
\label{section-preliminary}

For sake of simplicity, we introduce the following notations
\begin{itemize}
\item for any vector $X \in \Rset^n$ and matrix $Z \in {\cal
    M}_{n,n}(\Rset)$, we denote
\[
\tilde X=[X_{i}]_{i=2\cdots n} \ , \qquad
\tilde Z=[Z_{ij}]\mathop{{}_{i=2\cdots n}}_{j=2\cdots n} \ .
\]
\item $\diag(X)$ denotes the diagonal matrix whose diagonal elements are
the entries of the vector $X$
\item we denote by $\vand(x_{1},\cdots,x_{m})$ the (square) Vandermonde matrix
\[
\vand(x_{1},\cdots,x_{m})=\left[\begin{array}{llll}
1 & x_{1} & \cdots & x_{1}^{m-1}\\
\vdots & \vdots & \vdots & \vdots\\
1 & x_{m} & \cdots & x_{m}^{m-1}
\end{array}\right]
\]

\item we define the vector in $\Rset^n$
\[
\one=\left[\begin{array}{c}1\\\vdots\\1
\end{array}\right] \ .
\]
\end{itemize}

\begin{lemma}
\label{lemmapositivity}
Under Assumptions \ref{H1}, the domain $\Rset_{+}^n$ is invariant by the
dynamics for any non-negative control $u$.
\end{lemma}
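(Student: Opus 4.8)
The plan is to recognize Lemma \ref{lemmapositivity} as the standard positivity statement for a linear control system $\dot X = AX + Bu$: the nonnegative orthant $\Rset_+^n$ is forward invariant for all admissible $u \geq 0$ precisely when $A$ is a Metzler matrix (nonnegative off-diagonal entries) and $B$ is entrywise nonnegative. So I would reduce the whole lemma to verifying these sign conditions from Assumptions \ref{H1}, and then close the argument with a Nagumo-type sub-tangentiality check on the faces of $\Rset_+^n$.

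First I would check the sign structure. Since $B=[1,0,\dots,0]^t\geq 0$, the forcing term $Bu$ is nonnegative whenever $u\geq 0$. For $A=-BB^t-V^{-1}M$, the key observation is that $BB^t$ has its only nonzero entry in position $(1,1)$, so it contributes nothing to any off-diagonal entry of $A$. Hence for $i\neq j$ one has $A_{ij}=-(V^{-1}M)_{ij}=-M_{ij}/V_i$. Because $V$ is a positive diagonal matrix ($V_i>0$) and $M_{ij}\leq 0$ for $i\neq j$ by property iii, this yields $A_{ij}\geq 0$ for all $i\neq j$; that is, $A$ is Metzler.

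Next I would establish invariance through the boundary condition. On each face $F_i=\{X\in\Rset_+^n:X_i=0\}$, I compute the $i$-th component of the vector field, using $X_i=0$:
\[
\dot X_i=\sum_{j}A_{ij}X_j+B_iu=\sum_{j\neq i}A_{ij}X_j+B_iu .
\]
For $X\geq 0$ on this face, each term $A_{ij}X_j$ with $j\neq i$ is nonnegative by the Metzler property just established, and $B_iu\geq 0$ by the sign of $B$ and $u$. Therefore $\dot X_i\geq 0$ on $F_i$, so the field never points strictly outward across any face of the orthant, and Nagumo's theorem gives forward invariance of $\Rset_+^n$.

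I expect the only delicate point to be the dependence on the time-varying control $u$: the tangency inequality $\dot X_i\geq 0$ on $F_i$ must hold uniformly over all admissible $u(t)\geq 0$, which it does here since the $Bu$ contribution is nonnegative regardless of the value taken by $u$. The remaining bookkeeping---that the $-BB^t$ correction lives only on the diagonal and therefore cannot disturb the Metzler sign pattern---is routine, but it is the one place where a sign slip would invalidate the argument, so I would state it explicitly.
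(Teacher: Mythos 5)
Your proposal is correct and takes essentially the same approach as the paper: the paper's proof also checks that $\dot X_i\geq 0$ on each face $\{X_i=0\}$ of $\Rset_+^n$ using the Metzler property of $A$ and the nonnegativity of $B$ and $u$. The only difference is one of detail --- you explicitly derive the Metzler sign pattern from the decomposition $A=-BB^t-V^{-1}M$ (noting that $-BB^t$ only touches the $(1,1)$ entry) and invoke Nagumo's theorem by name, whereas the paper simply asserts that $A$ is Metzler and concludes directly.
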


\begin{proof}
Take a vector $X$ that is on the boundary of $\Rset_{+}^n$ and set
$I=\{ i \in 1\cdots n \, \vert \, X_{i}=0\}$. At such a vector, one has
\[
\dot X_{i} = \sum_{j\notin} A_{i,j}X_{j}+B_{i}u \ , \quad i \in I
\]
Notice that the matrix $A$ is Metzler (that is all its non-diagonal
terms are non-negative) and $B$ is a non-negative vector. Consequently
one has
\[
\dot X_{i} \geq 0 \ ,  \quad i \in I
\]
which proves that any forward trajectory cannot leave the non-negative cone.
\end{proof}

\begin{remark}
Under Assumptions \ref{H1}, the linear system (\ref{input-output}) is
positive in the sense that for any non-negative initial state and
non negative control $u(\cdot)$, state and output are non-negative for
any positive time (see \cite{FR00}).
\end{remark}

\begin{lemma}
\label{lemmaMtilde}
Under Assumptions \ref{H1}, the matrix $\tilde M$ is symmetric definite positive.
\end{lemma}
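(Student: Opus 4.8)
The plan is to use the fact that Assumptions~\ref{H1} make $M$ a weighted graph Laplacian, whose quadratic form is a sum of squares, and then to transfer strict positivity to the principal submatrix $\tilde M$ by exploiting the deleted coordinate. First I would rewrite the quadratic form $X^t M X$ for an arbitrary $X \in \Rset^n$. Substituting the diagonal entries via property iv.\ as $M_{ii}=-\sum_{j\neq i}M_{ij}$ and using the symmetry of $M$, a direct regrouping yields
\[
X^t M X = \sum_{i<j} (-M_{ij})\,(X_i - X_j)^2 .
\]
Since $-M_{ij}\geq 0$ by property iii., every summand is non-negative, so $M$ is positive semidefinite, and $X^t M X = 0$ forces $X_i = X_j$ whenever $M_{ij}\neq 0$.

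Next I would determine the kernel of $M$ exactly. Property iv.\ immediately gives $M\one = 0$, so $\one \in \ker M$. Conversely, if $X^t M X = 0$ then $X_i = X_j$ across every edge of the associated graph; by the irreducibility assumption i.\ this graph is strongly connected, which forces $X$ to be constant, i.e.\ $X \in \mathrm{span}(\one)$. Hence $\ker M = \mathrm{span}(\one)$ is exactly one-dimensional and $M$ has rank $n-1$.

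Finally I would pass to $\tilde M$. Given any $\tilde X \in \Rset^{n-1}$, I embed it as the vector $X = (0,\tilde X) \in \Rset^n$ with vanishing first coordinate, so that by construction $\tilde X^t \tilde M \tilde X = X^t M X \geq 0$. Should this vanish, then $X \in \ker M = \mathrm{span}(\one)$; but $X_1 = 0$ then forces $X = 0$ and thus $\tilde X = 0$. Consequently $\tilde X^t \tilde M \tilde X > 0$ for every $\tilde X \neq 0$, and since $\tilde M$ inherits symmetry from $M$, it is symmetric positive definite.

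The step that must be handled carefully is the characterization of the kernel: semidefiniteness of $M$ is a routine sum-of-squares argument, but establishing that $X^t M X = 0$ forces $X$ to be constant is where the connectedness coming from the irreducibility hypothesis i.\ is essential. This is precisely the point that upgrades mere semidefiniteness of $M$ to strict definiteness of the grounded principal submatrix $\tilde M$, and it is the role played by the full strength of Assumptions~\ref{H1} rather than the Laplacian structure alone.
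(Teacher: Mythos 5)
Your proof is correct, but it follows a genuinely different route from the paper's. You use the weighted-Laplacian sum-of-squares identity
\[
X^t M X \;=\; \sum_{i<j}\,(-M_{ij})\,(X_i - X_j)^2 ,
\]
which gives positive semidefiniteness of $M$ at once, then identify $\ker M = \mathrm{span}(\one)$ via the connectedness guaranteed by irreducibility, and finally ``ground'' the Laplacian at node $1$: embedding $\tilde X$ as $(0,\tilde X)$ shows any null vector of the quadratic form of $\tilde M$ would have to be constant with first entry zero, hence zero. The paper instead argues through diagonal dominance: it shows $\tilde M$ is weakly diagonally dominant, observes that every irreducible block of $\tilde M$ must have a row that is strictly dominant (because that block is connected to the mobile zone, by irreducibility of $M$), invokes Taussky's theorem on irreducibly diagonally dominant matrices to get invertibility, and then uses Gershgorin discs to locate the (real) eigenvalues in the nonnegative half-line, concluding positivity. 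Both arguments exploit the same structural fact --- connectivity of the network to the mobile zone is what upgrades semidefiniteness to definiteness --- but your version is more elementary and self-contained (no Taussky, no Gershgorin), and it yields the sharper byproduct $\ker M = \mathrm{span}(\one)$, i.e.\ $\rk M = n-1$, which is of independent interest; the paper's version stays within the classical toolkit of compartmental/diagonally dominant matrices, which matches the references it builds on.
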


\begin{proof}
The matrix $\tilde M$ is symmetric and consequently it is
diagonalizable with real eigenvalues. Its diagonal terms are positive
and off-diagonal negative or equal to zero. Furthermore one as
\[
\tilde M_{ii}=M_{i+1,i+1}=-\sum_{j\neq i+1}M_{i+1,j}=-\sum_{j\neq
  i}\tilde M_{i,j}-M_{i+1,1}\geq -\sum_{j\neq
  i}\tilde M_{i,j}
\]
The matrix $\tilde M$ is thus (weakly) diagonally dominant.
As each irreducible block of the matrix $\tilde M$ has to be connected to
the mobile zone (otherwise the matrix $A$ won't be irreducible), we
deduce that at least one line of each block has to be strictly
diagonally dominant. Then, each block is {\em irreducibly diagonally
dominant} and thus invertible by Taussky Theorem (see
\cite[6.2.27]{HJ85}).
Finally, the eigenvalues of the matrix $\tilde M$ belong to the Gershgorin discs
\[
G(\tilde M)=\bigcup_{i} \left\{ \lambda \in \Rset \, \vert \,
|\lambda-\tilde M_{i,i}|\leq \sum_{j\neq i}|\tilde M_{i,j}|\right\}
\]
and we deduce that each eigenvalues $\tilde \lambda_{i}$ of $\tilde
M$ are positive. The matrix $\tilde M$ is thus symmetric definite positive.
\end{proof}

\begin{lemma}
\label{lemmaAinversible}
Under Assumptions \ref{H1}, the matrix $A$ is non singular.
Furthermore, the dynamics admits the unique equilibrium $\one u$, for any constant control $u$
\end{lemma}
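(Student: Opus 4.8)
The plan is to establish the non-singularity of $A$ by showing that its kernel is trivial, and then to deduce the equilibrium assertion by a direct computation. For the first part I would suppose $AX=0$ and aim to conclude $X=0$. Using $A=-BB^t-V^{-1}M$, the equation $AX=0$ reads $V^{-1}MX=-BB^tX=-X_1 B$, where $X_1=B^tX$ is the first coordinate of $X$; hence $MX=-X_1 VB=-X_1V_1 B$, with $V_1>0$ the first diagonal entry of $V$.

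The decisive step exploits property iv.\ of Assumptions \ref{H1}, which gives $M\one=0$; since $M$ is symmetric this also yields $\one^t M=0$. Premultiplying the identity $MX=-X_1V_1 B$ by $\one^t$ and using $\one^t B=1$ then produces $0=-X_1V_1$, so that $X_1=0$. This is the point where both the symmetry of $M$ and its zero row sums are genuinely used, and it is the step I expect to be the real crux of the argument.

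Once $X_1=0$, the relation $MX=-X_1V_1 B$ collapses to $MX=0$; extracting rows $2,\dots,n$ and using $X_1=0$ leaves exactly $\tilde M\tilde X=0$. By Lemma \ref{lemmaMtilde} the matrix $\tilde M$ is symmetric positive definite, hence invertible, so $\tilde X=0$. Together with $X_1=0$ this gives $X=0$, proving that $A$ is non-singular.

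For the equilibrium statement I would simply compute $A\one$: since $M\one=0$ and $B^t\one=1$, one obtains $A\one=-BB^t\one=-B$. Hence $A(\one u)+Bu=0$ for every constant control $u$, so $\one u$ is an equilibrium of $\dot X=AX+Bu$. Its uniqueness is then immediate from the non-singularity of $A$ just proved, since the equilibrium equation $AX=-Bu$ admits a unique solution.
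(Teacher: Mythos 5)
Your proof is correct, and at the crux it takes a cleaner route than the paper's. Both arguments start identically (assume $AX=0$, rewrite it as $MX=-V_1X_1B$, and finish by invoking Lemma \ref{lemmaMtilde} to kill $\tilde X$ once $X_1=0$ is known), and the equilibrium part is the same computation. The difference is in how $X_1=0$ is obtained. The paper decomposes $M$ into blocks $M_{11}$, $L$, $L'$, $\tilde M$, solves the second block row as $\tilde X=-\tilde M^{-1}L'X_1$, and then shows via $M\one=0$ that the Schur complement $M_{11}-L\tilde M^{-1}L'$ vanishes, forcing $V_1X_1=0$; note this uses the invertibility of $\tilde M$ already at this stage. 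You instead observe that symmetry plus zero row sums give $\one^t M=0$, so premultiplying $MX=-V_1X_1B$ by $\one^t$ yields $0=-V_1X_1$ in one line, with no block decomposition and no use of $\tilde M^{-1}$ until the final step. Your version is more elementary and isolates exactly where each hypothesis enters (symmetry and property iv for $X_1=0$; positive definiteness of $\tilde M$ only for $\tilde X=0$), whereas the paper's Schur-complement identity $M_{11}=L\tilde M^{-1}L'$ is a slightly heavier computation that, as a by-product, exhibits the explicit dependence $\tilde X=-\tilde M^{-1}L'X_1$ of the immobile-zone concentrations on the mobile one.
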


\begin{proof}
Let $X$ be a vector such that $AX=0$. Then, one has $BB^tX=-V^{-1}MX$
or equivalently
\[
MX=-V_{1}X_{1}B \ .
\]

Let us decompose the matrix $M$ as follows
\[
M=\left[\begin{array}{cc}
M_{11} & L\\
L' & \tilde M
\end{array}
\right]
\]
where $L$ is a row vector of length $n-1$.
Then equality $MX=-V_{1}X_{1}B$ amounts to write
\[
\left\{\begin{array}{l}
M_{11}X_{1}+L\tilde X=-V_{1}X_{1}\\
L'X_{1}+\tilde M \tilde X=0
\end{array}
\right.
\]
$\tilde M$ being invertible (Lemma \ref{lemmaMtilde}), one can write
$\tilde X=-\tilde M^{-1}L'X_{1}$ and thus $X_{1}$ has to fulfill
\[
(M_{11}-L\tilde M^{-1}L')X_{1}=-V_{1}X_{1}
\]
From Assumptions \ref{H1}, one has $M\one=0$ which gives
\[
\left\{\begin{array}{l}
M_{11}+L\tilde\one=0\\
L'+\tilde M\tilde\one=0
\end{array}
\right.
\]
that implies $M_{11}-L\tilde M^{-1}L'=0$. We conclude that one should
have $X_{1}=0$ and then $\tilde X=0$, that is $X=0$. The matrix $A$ is
thus invertible.

Finally, the system admits an unique equilibrium $X^\star=-A^{-1}Bu$
for any constant control $u$. As Assumptions \ref{H1} imply the equality
$A\one=-B$, we deduce that the equilibrium is given by $X^\star=\one u$. 

\end{proof} 

\begin{lemma}
\label{lemmaAtilde}
Under Assumptions \ref{H1}, the sub-matrix $\tilde A$ is diagonalizable
with real negative eigenvalues.
\end{lemma}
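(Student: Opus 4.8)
The plan is to first identify $\tilde A$ explicitly and reduce the statement to a standard fact about products of positive definite matrices. Since $BB^t$ has all its entries equal to zero except the $(1,1)$ one, deleting the first row and column annihilates this term, so that $\tilde A=-\widetilde{V^{-1}M}$. Because $V$ is diagonal, the $(i,j)$ entry of $V^{-1}M$ is simply $M_{ij}/V_i$, and restricting the indices to $\{2,\cdots,n\}$ gives
\[
\tilde A=-\tilde V^{-1}\tilde M,
\]
where $\tilde V=\diag(V_2,\cdots,V_n)$ is a positive diagonal matrix (in particular symmetric positive definite), and $\tilde M$ is symmetric positive definite by Lemma \ref{lemmaMtilde}. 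The claim thus amounts to showing that the product of the two symmetric positive definite matrices $\tilde V^{-1}$ and $\tilde M$ is diagonalizable with strictly positive real eigenvalues.

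To establish this I would symmetrize by congruence. Let $\tilde V^{-1/2}$ denote the positive diagonal square root of $\tilde V^{-1}$. Conjugating by $\tilde V^{-1/2}$ yields
\[
\tilde V^{1/2}\bigl(\tilde V^{-1}\tilde M\bigr)\tilde V^{-1/2}
=\tilde V^{-1/2}\tilde M\,\tilde V^{-1/2},
\]
which is a genuine similarity transformation and therefore preserves both the spectrum and diagonalizability. The matrix on the right is symmetric (since $\tilde M$ is symmetric and $\tilde V^{-1/2}$ is symmetric), and it is positive definite because it is congruent to $\tilde M$ through the invertible matrix $\tilde V^{-1/2}$ (Sylvester's law of inertia). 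Hence it is orthogonally diagonalizable with strictly positive real eigenvalues. Transporting this conclusion back along the similarity, $\tilde V^{-1}\tilde M$ is diagonalizable with the same strictly positive real eigenvalues, and consequently $\tilde A=-\tilde V^{-1}\tilde M$ is diagonalizable with strictly negative real eigenvalues.

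The only point requiring genuine care is that a product of two symmetric matrices need not itself be symmetric, so one cannot invoke the spectral theorem directly on $\tilde V^{-1}\tilde M$. The conjugation by $\tilde V^{1/2}$ is precisely the device that converts the product into a symmetric matrix congruent to $\tilde M$, and this single step delivers simultaneously the reality of the spectrum, its sign, and the diagonalizability. Everything else is bookkeeping on the block structure induced by $B$, in particular the disappearance of the $BB^t$ contribution upon passing to the submatrix $\tilde A$.
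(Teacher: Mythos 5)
Your proof is correct and follows essentially the same route as the paper: the same identification $\tilde A=-\tilde V^{-1}\tilde M$ and the same similarity $\tilde V^{1/2}\tilde A\tilde V^{-1/2}=-\tilde V^{-1/2}\tilde M\tilde V^{-1/2}$, which symmetrizes the product and reduces everything to Lemma \ref{lemmaMtilde}. The only (cosmetic) difference is that you obtain the sign of the eigenvalues from positive definiteness of the congruent matrix $\tilde V^{-1/2}\tilde M\tilde V^{-1/2}$, whereas the paper evaluates the quadratic form $X'\tilde M X=-\lambda X'\tilde V X$ on an eigenvector; both rest on the same facts.
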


\begin{proof}
Notice first that the matrix $\tilde A$ can be written as 
$\tilde A=-\tilde V^{-1}\tilde M$. The matrix $\tilde V$
being diagonal with positive diagonal terms, one can consider its square root
$\tilde V^{1/2}$, defined as a diagonal matrix with 
  $\sqrt{\tilde V_{i}}$ terms on the diagonal, and its inverse $\tilde
  V^{-1/2}$. 
Then, one has
\[
\tilde V^{1/2}\tilde A\tilde V^{-1/2}=-
\tilde V^{-1/2}\tilde M\tilde V^{-1/2}
\]
which is symmetric. So $\tilde A$ is similar to a symmetric matrix, and thus
diagonalizable. Let $\lambda$ be an eigenvalue of $\tilde A$. There
exist an eigenvector $X\neq 0$ such that
\[
\tilde AX=\lambda X \Rightarrow X'\tilde V(\tilde AX)=\lambda
X'\tilde V X \Leftrightarrow X'\tilde MX=-\lambda
X'\tilde V X
\]
As $\tilde M$ is definite positive (Lemma \ref{lemmaMtilde}) as well as $\tilde
V$, we conclude that $\lambda$ has to be negative.
\end{proof}

\section{About controllability and observability}
\label{section-controllability}

We recall the usual definitions of controllability and observability
of single-input single-output systems $(A,B,C)$ of dimension $n$ (see
for instance \cite{K80}).

\begin{defs}
\begin{itemize}
\item[]
\item The {\bf controllability matrix} associated to the pair $(A,B)$ is
given by
\[
{\cal C}_{A,B}=[B,AB,\cdots,A^{n-1}B]
\]
\item The {\bf observability matrix} associated to the pair $(A,C)$ is
given by
\[
{\cal O}_{A,C}=\left[\begin{array}{c}
C\\
CA\\
\vdots\\
CA^{n-1}
\end{array}\right]
\]
\item A system $\dot X = AX+Bu$ 
is said to be {\bf controllable} when $\rk({\cal C}_{A,B})=n$, and {\bf
  observable} for $y=CX$ when $\rk({\cal O}_{A,C})=n$.
\item To a given triplet $(A,B,C)$, we associate the linear operator
  ${\cal F}_{A,B,C}: {\cal L}^2(\Rset_{+},\Rset) \mapsto {\cal
    L}^2(\Rset_{+},\Rset)$ that is defined as $y(\cdot)={\cal
    F}_{A,B,C}[u(\cdot)]$ with $y(\cdot)=CX(\cdot)$ where $X(\cdot)$
  is solution of $\dot X=AX+Bu(\cdot)$ for the initial condition
  $X(0)=0$. We say that a triplet $(A,B,C)$ is a {\bf minimal
  representation} if among all the triplets $(A^\dag,B^\dag,C^\dag)$ such that
  ${\cal F}_{A^\dag,B^\dag,C^\dag}={\cal F}_{A,B,C}$, the dimension of
  $A$ is minimal.
\end{itemize}
\end{defs}
We recall a well known result of the literature on linear input-output
systems \cite[2.4.6]{K80}.
\begin{theorem}(Kalman)
A representation $(A,B,C)$ is minimal if and only if the pairs
$(A,B)$ and $(A,C)$ are respectively controllable and observable.
\end{theorem}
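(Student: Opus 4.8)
The plan is to encode the operator ${\cal F}_{A,B,C}$ through its Markov parameters and to exploit the factorization of the associated Hankel matrix by the controllability and observability matrices. First I would note that for the zero initial condition the output reads $y(t)=\int_{0}^{t}Ce^{A(t-s)}Bu(s)\,ds$, so that ${\cal F}_{A,B,C}$ is convolution by the impulse response $t\mapsto Ce^{At}B$. Since $Ce^{At}B=\sum_{k\geq0}CA^{k}B\,t^{k}/k!$ is real-analytic, two triplets $(A,B,C)$ and $(A^{\dag},B^{\dag},C^{\dag})$ induce the same operator if and only if all their Markov parameters coincide, i.e. $CA^{k}B=C^{\dag}(A^{\dag})^{k}B^{\dag}$ for every $k\geq0$. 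This reduces the whole statement to a rank computation.

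For the sufficiency (controllable and observable $\Rightarrow$ minimal) I would form the $n\times n$ Hankel matrix $H=[CA^{i+j-2}B]_{i,j=1\cdots n}$ and record the factorization $H={\cal O}_{A,C}\,{\cal C}_{A,B}$, which is immediate from the definitions since the $(i,j)$ block is $(CA^{i-1})(A^{j-1}B)$. For a single-input single-output system of dimension $n$ both factors are square $n\times n$, and by hypothesis each has full rank $n$; hence $H$ is invertible and $\rk(H)=n$. Now suppose a rival realization $(A^{\dag},B^{\dag},C^{\dag})$ of dimension $m<n$ induced the same operator. By equality of the Markov parameters the same matrix $H$ factors through the intermediate space of dimension $m$, as the product of the $n\times m$ matrix $[C^{\dag}(A^{\dag})^{i-1}]_{i}$ by the $m\times n$ matrix $[(A^{\dag})^{j-1}B^{\dag}]_{j}$; the rank-of-a-product bound then gives $\rk(H)\leq m<n$, a contradiction. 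So no realization of smaller dimension exists and $(A,B,C)$ is minimal.

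For the necessity I would argue by contraposition, showing that a non-controllable or non-observable realization reduces to a strictly smaller one with the same operator. If $(A,B)$ is not controllable, the reachable subspace ${\cal R}=\mathrm{Im}({\cal C}_{A,B})$ has dimension $r<n$; it contains $\mathrm{Im}(B)$ and is $A$-invariant, the invariance under the top power $A^{n}$ following from the Cayley--Hamilton theorem. In a basis adapted to ${\cal R}$ the matrix $A$ becomes block upper-triangular, $B$ is supported on ${\cal R}$, and one checks $CA^{k}B=C_{1}A_{11}^{k}B_{1}$, so the $r$-dimensional subsystem $(A_{11},B_{1},C_{1})$ carries the same Markov parameters, hence the same operator. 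Dually, if $(A,C)$ is not observable, the unobservable subspace ${\cal N}=\ker({\cal O}_{A,C})$ is a nonzero $A$-invariant subspace, and passing to a basis adapted to ${\cal N}$ yields a block lower-triangular form from which a subsystem of dimension $n-\dim{\cal N}<n$ with the same Markov parameters is extracted. In either case $(A,B,C)$ is not minimal.

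The main obstacle I expect is the bookkeeping in the necessity part: verifying the $A$-invariance of the reachable and unobservable subspaces (where Cayley--Hamilton is exactly what closes the power at order $n$) and confirming that the block-triangular reduction leaves every $CA^{k}B$ unchanged. By contrast the sufficiency part is clean once the factorization $H={\cal O}_{A,C}\,{\cal C}_{A,B}$ and the submultiplicativity of rank are in place.
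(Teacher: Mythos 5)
The paper does not prove this statement: it is quoted as Kalman's classical theorem with a citation to \cite[2.4.6]{K80}, so there is no in-paper argument to compare against. Your blind proof is correct and is essentially the standard textbook argument found in that reference: reduce equality of the input-output operators to equality of the Markov parameters $CA^kB$ via analyticity of the impulse response; for sufficiency, use the Hankel factorization $H={\cal O}_{A,C}\,{\cal C}_{A,B}$ (square $n\times n$ factors in the SISO case, both invertible by hypothesis) so that a rival realization of dimension $m<n$ would force $\rk(H)\leq m<n$, a contradiction; for necessity, argue by contraposition with the Kalman decomposition, where Cayley--Hamilton gives the $A$-invariance of $\mathrm{Im}({\cal C}_{A,B})$ and of $\ker({\cal O}_{A,C})$, and the block-triangular form shows the reduced subsystem has the same Markov parameters. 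All steps are sound, including the bookkeeping you flagged as the main risk, so this constitutes a complete self-contained proof of the result the paper only cites.
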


\bigskip

The particular structures of the matrices $A$, $B$ and $C$ that we
consider allow to show the following property.

\begin{lemma}
\label{lemmacontrollable}
Under Assumptions \ref{H1}, $(A,B)$ controllable is equivalent
to $(A,C)$ observable.
\end{lemma}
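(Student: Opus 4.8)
The plan is to combine the standard controllability--observability duality with the fact that, under Assumptions \ref{H1}, the matrix $A$ is similar to its transpose through the volume matrix $V$, together with the coincidence $C=B^{t}$ that is built into the structure of the problem.

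First I would record the elementary duality identity. Transposing the observability matrix turns its rows $CA^{k}$ into the columns $(A^{t})^{k}C^{t}$, so that ${\cal O}_{A,C}^{t}={\cal C}_{A^{t},C^{t}}$. Since the rank of a matrix equals that of its transpose, $(A,C)$ is observable if and only if $(A^{t},C^{t})$ is controllable. Because $B$ and $C$ here are the first canonical (column and row) vectors, one has $C^{t}=B$, and hence observability of $(A,C)$ is equivalent to controllability of $(A^{t},B)$.

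The key step, and the only place where Assumptions \ref{H1} truly enter, is to show that $A$ and $A^{t}$ are similar via $V$, namely $A^{t}=VAV^{-1}$. Writing $A=-BB^{t}-V^{-1}M$ and using that $M$ is symmetric gives $A^{t}=-BB^{t}-MV^{-1}$. Conjugating by $V$ yields $VAV^{-1}=-VBB^{t}V^{-1}-MV^{-1}$. Since $B$ has only its first entry nonzero and $V$ is diagonal, $VB=V_{1}B$ and $B^{t}V^{-1}=V_{1}^{-1}B^{t}$, so the factors $V_{1}$ cancel and $VBB^{t}V^{-1}=BB^{t}$. Therefore $VAV^{-1}=-BB^{t}-MV^{-1}=A^{t}$.

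Finally I would transport controllability across this similarity. Because $V^{-1}B=V_{1}^{-1}B$ is proportional to $B$, we get $(A^{t})^{k}B=(VAV^{-1})^{k}B=VA^{k}V^{-1}B=V_{1}^{-1}VA^{k}B$, and hence ${\cal C}_{A^{t},B}=V_{1}^{-1}V\,{\cal C}_{A,B}$. As $V$ is nonsingular, $\rk({\cal C}_{A^{t},B})=\rk({\cal C}_{A,B})$, so $(A^{t},B)$ is controllable exactly when $(A,B)$ is; chaining this with the duality of the second paragraph gives the claimed equivalence. The main obstacle is the identity $A^{t}=VAV^{-1}$, everything else being routine once it is in hand; it is essentially the same similarity that underlies Lemma \ref{lemmaAtilde}, and it is here that the symmetry of $M$ and the coincidence $C=B^{t}$ are decisively used.
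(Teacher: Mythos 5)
Your proof is correct and takes essentially the same route as the paper: the decisive identity $VAV^{-1}=A^{t}$ (from the symmetry of $M$ and the cancellation $VBB^{t}V^{-1}=BB^{t}$), combined with $V^{-1}B=V_{1}^{-1}B$ and $C^{t}=B$, gives $\rk({\cal C}_{A^{t},B})=\rk({\cal C}_{A,B})$, which is exactly the paper's computation $V_{1}{\cal O}_{A,C}^{t}=V{\cal C}_{A,B}$ rephrased through the transposition duality. The only differences are presentational: you state the duality ${\cal O}_{A,C}^{t}={\cal C}_{A^{t},C^{t}}$ explicitly and spell out the step $VBB^{t}V^{-1}=BB^{t}$, which the paper leaves implicit.
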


\begin{proof}
Notice first that one has
\[
VAV^{-1}=-BB^t-MV^{-1}=(-BB^t-V^{-1}M)^\prime=A'
\]
and by recursion
\[
VA^kV^{-1}=(A')^k
\]
Then, one can write
\[
{\cal O}'_{A,C} = [B,A'B,(A')^{2}B,\cdots] = V[V^{-1}B,AV^{-1}B,A^{2}V^{-1}B,\cdots]
\]
But one has
\[
V^{-1}B=V_{1}^{-1}B
\]
Thus
\[
V_{1}{\cal O}_{A,C}^\prime=V[B,AB,A^{2}B,\cdots]=V{\cal C}_{A,B}
\]
and we conclude
\[
\rk({\cal O}_{A,C}) = \rk({\cal C}_{A,B}).
\]
\end{proof}

We also recall a nice result about tridiagonalization of single-input
single-output systems, from \cite[Lemma 2.2]{GKV92}.

\begin{prop}
\label{PropGolub}
Let $T$ be an invertible transformation, then one has
\[
T^{-1}AT=\left[\begin{array}{ccccc}
\star & y_{2} & & &  \makebox(-10,-10){\text{\em \Large 0}} \\
x_{2} & \ddots & \ddots & & \\
& \ddots & \ddots & \ddots & \\
& & \ddots & \ddots & y_{n}\\
\makebox(20,20){\text{\em \Large 0}} &  & & x_{n} & \star
\end{array}\right], \quad 
T^{-1}B=\left[\begin{array}{c}x_{1}\\0\\\vdots\\\vdots\\0
\end{array}\right], \quad
CT=\left[\begin{array}{ccccc} y_{1} & 0 & \cdots & \cdots & 0
\end{array}\right]
\]
with $x_{i}\neq 0$ and $y_{i}\neq 0$ $(i=1\cdot n)$ if and only if
\[
T^{-1}{\cal C}_{A,B}=\left[\begin{array}{cccc}
c_{1} & \star & \cdots & \star\\
 & \ddots & \ddots & \vdots\\
& & \ddots & \star\\
\makebox(20,20){\text{\em \Large 0}} & & & c_{n}
\end{array}\right] \mbox{ and }
{\cal O}_{A,C}T=\left[\begin{array}{cccc}
o_{1} & & & \makebox(-10,-10){\text{\em \Large 0}}\\
\star & \ddots & & \\
\vdots & \ddots & \ddots & \\
\star & \cdots & \star & o_{n}
\end{array}\right]
\]
with $c_{i}\neq 0$, $o_{i}\neq 0$  $(i=1\cdot n)$ . 
Furthermore, one has
$x_{1}=c_{1}$, $y_{1}=o_{1}$, $x_{i+1}=c_{i+1}/c_{i}$,
$y_{i+1}=o_{i+1}/o_{i}$ $(i=1\cdots n-1)$.
\end{prop}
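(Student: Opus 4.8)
The plan is to prove both implications by exploiting the similarity identities $\mathcal{C}_{T^{-1}AT,\,T^{-1}B} = T^{-1}\mathcal{C}_{A,B}$ and $\mathcal{O}_{T^{-1}AT,\,CT} = \mathcal{O}_{A,C}T$, which let me replace the triplet $(A,B,C)$ by $(\hat A,\hat B,\hat C)=(T^{-1}AT,\,T^{-1}B,\,CT)$ and thereby reduce everything to the case $T=I$. After this reduction the statement becomes purely structural: a triplet is in the tridiagonal Jacobi form (unreduced tridiagonal $\hat A$, $\hat B = x_1 e_1$, $\hat C = y_1 e_1^t$ with all $x_i,y_i\neq 0$) if and only if its controllability matrix is upper triangular and its observability matrix lower triangular, both with nonzero diagonal.

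For the forward direction I would assume the tridiagonal form and compute directly. Since $\hat A^k\hat B = x_1\hat A^k e_1$ and $\hat A$ has subdiagonal entries $x_2,\dots,x_n$, an easy induction shows $\hat A^k e_1\in\mathrm{span}(e_1,\dots,e_{k+1})$ with $e_{k+1}$-coefficient exactly $x_2 x_3\cdots x_{k+1}$. Hence $\mathcal{C}$ is upper triangular with diagonal $c_{k+1}=x_1 x_2\cdots x_{k+1}$, giving $c_1=x_1$ and $c_{i+1}/c_i=x_{i+1}$. The observability claim is the transpose of this: $e_1^t\hat A^k$ has the dual structure driven by the superdiagonal entries $y_2,\dots,y_n$, yielding $\mathcal{O}$ lower triangular with diagonal $o_{k+1}=y_1\cdots y_{k+1}$ and the relations $o_1=y_1$, $o_{i+1}/o_i=y_{i+1}$.

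For the converse I would assume $\mathcal{C}$ upper triangular and $\mathcal{O}$ lower triangular with nonzero diagonals, and read off $\hat B = c_1 e_1$ (first column of $\mathcal{C}$) and $\hat C = o_1 e_1^t$ (first row of $\mathcal{O}$). The crucial structural fact is that upper-triangularity of $\mathcal{C}$ forces the Krylov subspaces to be the coordinate flag $\mathcal{K}_k=\mathrm{span}(e_1,\dots,e_k)$; since $\hat A\mathcal{K}_k\subseteq\mathcal{K}_{k+1}$, the matrix $\hat A$ is upper Hessenberg, and the nonvanishing of the diagonal $c_{k+1}=c_1\cdot\hat A_{21}\hat A_{32}\cdots\hat A_{k+1,k}$ forces every subdiagonal entry to be nonzero. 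Applying the same argument to the transposed pair $(\hat A^t,\hat C^t)$ --- whose controllability matrix is $\mathcal{O}^t$ --- shows $\hat A$ is unreduced lower Hessenberg as well. A matrix that is both upper and lower Hessenberg is tridiagonal, and the product formulas for the off-diagonal bands recover $x_{i+1}=c_{i+1}/c_i$ and $y_{i+1}=o_{i+1}/o_i$.

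The main obstacle is the careful bookkeeping in the Krylov/Hessenberg step: I must verify not only that the band below the subdiagonal collapses but that the diagonal entries of $\mathcal{C}$ and $\mathcal{O}$ are precisely the running products of the subdiagonal and superdiagonal entries, which is exactly what simultaneously guarantees the unreduced condition ($x_i,y_i\neq 0$) and pins down the explicit relations. Since the forward and converse arguments are genuinely dual, I would organize the write-up around a single controllability computation and then invoke it for the transposed pair, avoiding any duplication of the inductive argument.
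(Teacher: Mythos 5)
Your proof is correct. Note, however, that the paper offers no proof of this proposition at all: it is quoted as a known result from \cite[Lemma 2.2]{GKV92}, so there is no in-paper argument to compare against. Your route --- reducing to $T=I$ via the similarity identities ${\cal C}_{T^{-1}AT,\,T^{-1}B}=T^{-1}{\cal C}_{A,B}$ and ${\cal O}_{T^{-1}AT,\,CT}={\cal O}_{A,C}T$, an induction giving the running products $c_{k+1}=x_1x_2\cdots x_{k+1}$ for the forward direction, and for the converse the Krylov-flag argument (triangular ${\cal C}$ with nonzero diagonal forces ${\cal K}_k=\mathrm{span}(e_1,\dots,e_k)$, hence $\hat A$ unreduced upper Hessenberg) applied once and then dualized through the pair $(\hat A^t,\hat C^t)$ --- is the standard proof of this tridiagonalization lemma and is essentially the argument of the cited reference; the product identities you isolate are precisely what yield both the unreduced condition $x_i,y_i\neq 0$ and the ratio formulas $x_{i+1}=c_{i+1}/c_i$, $y_{i+1}=o_{i+1}/o_i$ simultaneously, so the write-up is complete as proposed.
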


\section{The Multi-Rate Mass Transfer and Multiple INteracting
  Continua configurations}
\label{section-configs}

We consider two particular structures of networks whose $(A,B,C)$
representations fulfill Assumption \ref{H1}.

\begin{defi}
A matrix $A$ that
fulfills Assumptions \ref{H1} and such that the sub-matrix
\[
\tilde
A=[A_{ij}]\mathop{{}_{i=2\cdots n}}_{j=2\cdots n}
\]
is diagonal is called a MRMT
(multi-rate mass transfer) matrix.
\end{defi}

MRMT matrices correspond to particular {\em arrow} structure of the
matrix $A$:
\[
A=\left[\begin{array}{cccccc}
-\frac{Q}{V_{1}}-\sum_{i}\frac{d_{1i}}{V_{1}} & \frac{d_{12}}{V_{1}} & \cdots &
\cdots & \cdots & \frac{d_{1n}}{V_{1}} \\[3mm]
\frac{d_{12}}{V_{2}} & -\frac{d_{12}}{V_{2}} & 0 & \cdots & \cdots & 0\\[3mm]
\vdots & 0 & \ddots & \ddots &  & \vdots \\[3mm]
\vdots & \vdots & & \ddots & \ddots & 0 \\[3mm]
\frac{d_{1n}}{V_{n}} & 0 & \cdots & \cdots & 0 & -\frac{d_{1n}}{V_{n}}
\end{array}\right]
\]
or {\em star} connections f the immobile part of depth one,
where all the immobile zones are connected to the mobile one (see
Fig. \ref{fig-MRMT}).
\begin{figure}[h]
\begin{center}
\includegraphics[width=8cm]{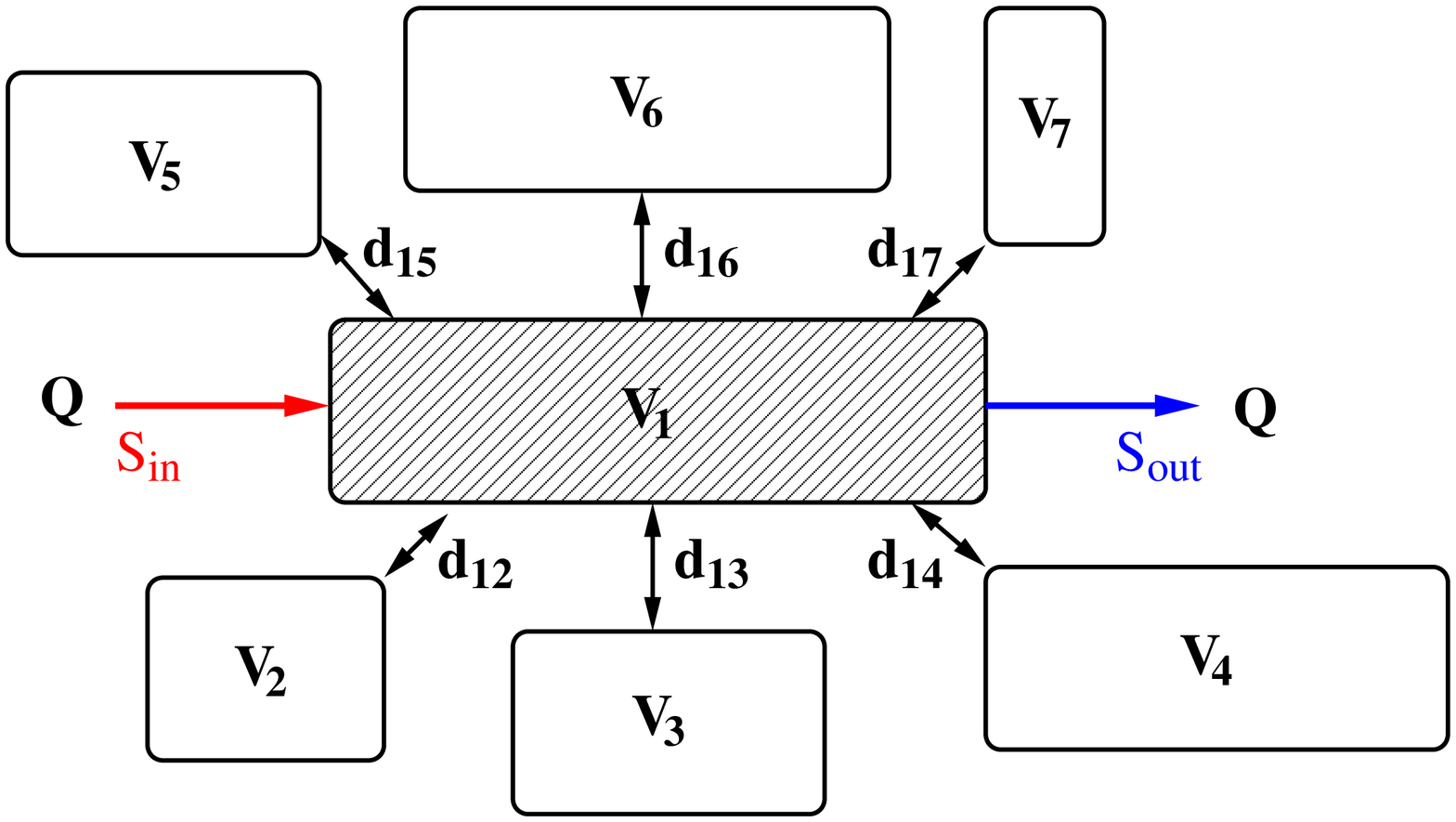}
\caption{\label{fig-MRMT}Example of a MRMT network}
\end{center}
\end{figure}

\bigskip

\begin{defi}
A matrix $A$ that
fulfills Assumptions \ref{H1} and which is tridiagonal is called a MINC
(Multiple INteracting Continua) matrix.
\end{defi}

MINC matrices correspond to particular structure:
\[
A=\left[\begin{array}{rcccccc}
\frac{Q}{V_{1}}-\frac{d_{12}}{V_{1}} & \frac{d_{12}}{V_{1}} &  &
 &  & \\[3mm]
\frac{d_{12}}{V_{2}} & -\frac{d_{12}+d_{23}}{V_{2}} &
\frac{d_{23}}{V_{2}} &  &   & \makebox(0,0){\text{\huge 0}}  \\[3mm]
 & \ddots & \ddots & \ddots & & & \\[3mm]
 \makebox(0,0){\text{\huge0}} & & \ddots & \ddots & \ddots & &  \\[3mm]
  &  &  &  & \frac{d_{n-1,n}}{V_{n}} & -\frac{d_{n-1,n}}{V_{n}}
 \end{array}\right]
\]
where the immobile parts are connected {\em in series}, of length
$n-1$, one of them being connected to the mobile zone (see Fig. \ref{fig-MINC}).
\begin{figure}[h]
\begin{center}
\includegraphics[width=8cm]{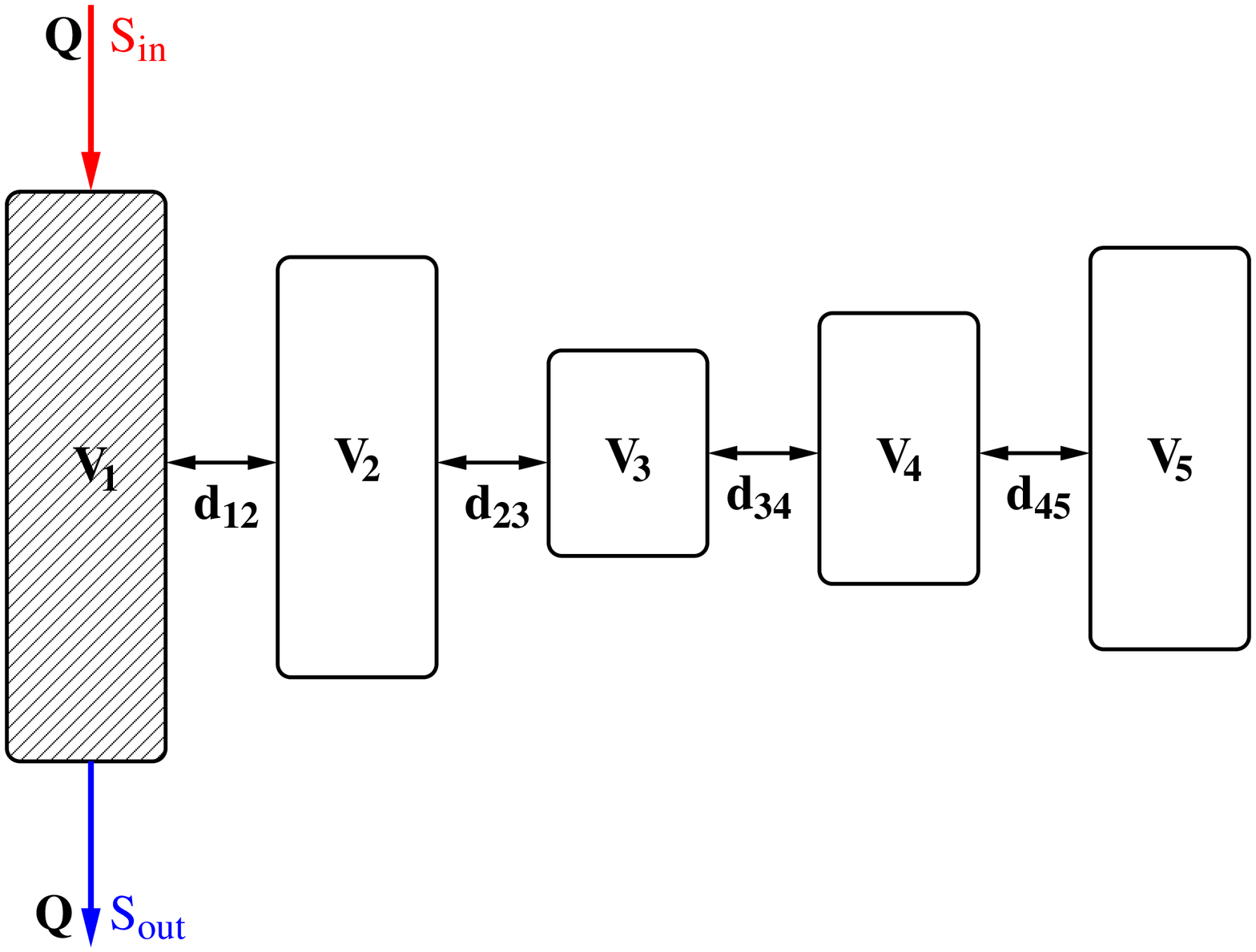}
\caption{\label{fig-MINC}Example of a MINC network}
\end{center}
\end{figure}

In the following, we give properties on eigenvalues for MRMT matrices
only, because it is easier to be proved for this particular structure. In
the next section, we shall show that MINC and MRMT structures are
indeed equivalent, and as a consequence eigenvalues of MINC matrices
fulfill the same properties.

\begin{lemma}
A MRMT matrix is Hurwitz (i.e. all the real parts of its eigenvalues
are negative).
\end{lemma}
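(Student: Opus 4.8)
The plan is to reduce the Hurwitz property to two separate facts: that every eigenvalue of a MRMT matrix is real, and that no eigenvalue is a nonnegative real number. Together with Lemma~\ref{lemmaAinversible} (which already rules out the eigenvalue $0$), these force the whole spectrum into the open left half-line. The \emph{arrow} structure of $A$ makes both facts tractable by elementary means, which is precisely why the MRMT case is the convenient one to treat first.

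For the reality of the spectrum I would exhibit a symmetric matrix similar to $A$. Writing $A=-BB^t-V^{-1}M$ and conjugating by the diagonal matrix $V^{1/2}$ gives
\[
V^{1/2}AV^{-1/2}=-BB^{t}-V^{-1/2}MV^{-1/2},
\]
since $V^{1/2}BB^tV^{-1/2}=BB^t$ (as $B$ is supported on the first coordinate and $V$ is diagonal). The right-hand side is symmetric because $M$ is, so $A$ is diagonalizable with real eigenvalues. For the location of the eigenvalues I would use the arrow form: with $\tilde A=\diag(-\mu_2,\dots,-\mu_n)$, $\mu_i>0$ (Lemma~\ref{lemmaAtilde}), expanding the determinant along the arrow shows that any eigenvalue not equal to some $-\mu_i$ is a root of the secular function
\[
f(\lambda)=(\lambda-A_{11})-\sum_{i=2}^n\frac{A_{1i}A_{i1}}{\lambda+\mu_i}.
\]
Here $A_{1i}A_{i1}>0$ for every immobile zone (by irreducibility), the poles $-\mu_i$ are all negative, and $f$ is strictly increasing on $[0,\infty)$. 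The key computation is the value at the origin: using $A_{i1}=\mu_i$ and the row-sum relation $A\one=-B$ (equivalently $A_{11}+\sum_{i\ge2}A_{1i}=-1$), one finds $f(0)=-A_{11}-\sum_{i\ge2}A_{1i}=1>0$. Hence $f>0$ on $[0,\infty)$ and $f$ admits no nonnegative root, while the remaining eigenvalues $-\mu_i$ are negative by construction.

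The main obstacle is making the two halves dovetail rather than either half in isolation: the secular analysis only locates \emph{real} roots, so it is useless without the similarity argument guaranteeing that the spectrum is real to begin with; one must also check that a repeated value $\mu_i=\mu_j$, in which a pole of $f$ is a genuine eigenvalue, does not escape the argument, but such an eigenvalue equals $-\mu_i<0$ and is harmless. A shorter but less structure-specific alternative would bypass the secular equation altogether: since $M$ is symmetric, diagonally dominant with $M\one=0$, and irreducible, it is positive semidefinite with one-dimensional kernel $\mathrm{span}(\one)$, so the symmetric matrix $V^{1/2}AV^{-1/2}$ above is in fact negative definite (its quadratic form vanishes only when $x_1=0$ and $V^{-1/2}x\in\mathrm{span}(\one)$, forcing $x=0$), yielding the Hurwitz property at once. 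I would nonetheless keep the arrow-based version in the MRMT section, since the aim of the later development is to inherit the spectral conclusion for MINC matrices through the equivalence rather than to reprove it directly.
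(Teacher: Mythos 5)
Your proof is correct, but it follows a genuinely different route from the paper's. The paper shifts the matrix to $\gamma I + A$ with $\gamma$ large, invokes Perron--Frobenius on this irreducible non-negative matrix to get a dominant \emph{real} eigenvalue $\lambda$ of $A$ with a \emph{positive} eigenvector $X$, and then exploits the MRMT equations satisfied by $X$: multiplying the $i$-th equation by $V_i/V_1$ and summing yields $-\tfrac{Q}{V_1}X_1=\lambda\bigl(X_1+\sum_{i\ge 2}\tfrac{V_i}{V_1}X_i\bigr)$, so positivity of $X$ forces $\lambda<0$, and all other eigenvalues have real part below $\lambda$. You instead symmetrize: $V^{1/2}AV^{-1/2}=-BB^t-V^{-1/2}MV^{-1/2}$ (valid since $B$ is supported on the first coordinate), which makes the spectrum real, and then you exclude nonnegative roots either by the secular function of the arrow matrix (your computation $f(0)=-A_{11}-\sum_{i\ge2}A_{1i}=1>0$ is correct, using $A_{i1}=\mu_i$ and $A\one=-B$, and irreducibility indeed gives $A_{1i}A_{i1}>0$ and handles repeated poles as you note) or, more directly, by negative definiteness of the symmetrized matrix, using that $M$ is an irreducible weighted graph Laplacian, hence positive semidefinite with kernel $\mathrm{span}(\one)$, so that $-x_1^2-(V^{-1/2}x)^tM(V^{-1/2}x)=0$ forces $x=0$. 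The comparison is instructive: the paper's argument stays inside positive-systems theory and only needs eigenvector information for the dominant eigenvalue, which fits the spirit of the rest of the paper; your secular-equation argument is elementary and tailored to the arrow structure; and your negative-definiteness argument is actually \emph{more general} than what the paper proves here, since it never uses the MRMT structure at all and shows that every matrix satisfying Assumptions \ref{H1} is Hurwitz --- which would render unnecessary the paper's device of proving the property for MRMT only and transporting it to MINC through the equivalence.
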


\begin{proof}
Take a number 
\[
\gamma>\max\left(\frac{Q}{V_{1}}+\sum_{i}\frac{d_{1i}}{V_{1}},\frac{d_{12}}{V_{2}},\cdots,
\frac{d_{1n}}{V_{n}}\right) \ .
\]
Then the matrix $\gamma I +A$ is an irreducible non-negative
matrix. From Perron-Frobenius Theorem (see \cite[Th 1.4]{BP94}), $r=\rho(\gamma I +A)$ is a
single eigenvalue of $\gamma I +A$ and there exists a positive
eigenvector associated to this eigenvalue. 
That amounts to claim that there exists a positive
eigenvector $X$ of the matrix $A$ for a single (real) eigenvalue
$\lambda=r-\gamma$, and furthermore that any other eigenvalue $\mu$ of
$A$ is such that
\[
-r<\gamma+\Re(\mu)<r \; \Rightarrow \; \Re(\mu)<\lambda \ .
\]
From the particular structure of MRMT matrix, such a vector $X$
has to fulfill the equalities
\[
\begin{array}{cll}
\ds -\frac{Q}{V_{1}}+\sum_{i=2}^n
\frac{d_{1i}}{V_{1}}(X_{i}-X_{1}) & = & \lambda X_{1}\\[4mm]
d_{1i}(X_{1}-X_{i})& = & \lambda V_{i} X_{i} \qquad (i=2\cdots n)
\end{array}
\]
from which one obtains
\[
 -\frac{Q}{V_{1}} =
 \lambda\left(X_{1}+\sum_{i=2}^n\frac{V_{i}}{V_{1}}X_{i}\right) \ .
\]
The vector $X$ being positive, we deduce that $\lambda$ is negative.
\end{proof}

As $\one u$ is an equilibrium of the system (\ref{input-output}) for any constant
control $u$, this Lemma allows then to claim the following result.

\begin{lemma}
For any constant control $u$, $\one u$ is a globally
exponentially stable of the dynamics (\ref{input-output}).
\end{lemma}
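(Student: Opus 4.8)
The plan is to reduce the affine dynamics to an autonomous error system and then invoke the Hurwitz property established in the preceding Lemma. First I would introduce the deviation variable $e = X - \one u$, where $\one u$ is the unique equilibrium guaranteed by Lemma \ref{lemmaAinversible}. Using the equilibrium identity $A\one u + Bu = 0$, which follows from $A\one = -B$ under Assumptions \ref{H1}, a direct substitution $\dot e = \dot X = A(e + \one u) + Bu = Ae + A\one u + Bu = Ae$ shows that $e$ obeys the homogeneous linear equation $\dot e = Ae$, the constant control dropping out entirely.

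The second step is to conclude from the spectral properties of $A$. Since $A$ is a MRMT matrix, the preceding Lemma establishes that it is Hurwitz, so its spectral abscissa $\alpha = \max_i \Re(\lambda_i)$ satisfies $\alpha < 0$. Standard linear systems theory then gives $e(t) = e^{At}e(0)$, and for any $\alpha' \in (\alpha, 0)$ there is a constant $K \geq 1$, independent of the initial state, such that $\|e^{At}\| \leq K e^{\alpha' t}$ for all $t \geq 0$. Hence $\|X(t) - \one u\| \leq K e^{\alpha' t}\|X(0) - \one u\|$ for every initial condition $X(0)$, which is exactly the global exponential stability of the equilibrium $\one u$.

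I expect essentially no obstacle here: the only nontrivial ingredient, namely the negativity of the real parts of the eigenvalues, has already been supplied by the preceding Hurwitz Lemma, and the equilibrium identity $A\one = -B$ by Lemma \ref{lemmaAinversible}. The entire argument is the reduction to $\dot e = Ae$ together with the elementary fact that a Hurwitz linear flow decays exponentially. The one point worth stating explicitly is that the decay estimate is uniform over all initial states, since the bound on $\|e^{At}\|$ does not depend on $e(0)$; this is precisely what upgrades mere convergence to \emph{global} exponential stability.
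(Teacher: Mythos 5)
Your proof is correct and follows exactly the route the paper intends: the paper gives no separate proof for this lemma, stating it as an immediate consequence of the equilibrium identity $A\one=-B$ (Lemma \ref{lemmaAinversible}) and the Hurwitz property of the MRMT matrix established just before. Your explicit reduction to the error dynamics $\dot e = Ae$ and the standard bound $\|e^{At}\|\leq K e^{\alpha' t}$ simply fill in the details the paper leaves implicit.
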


Finally, we characterize the minimal MRMT representations as follows.

\begin{lemma}
For a minimal representation $(A,B,C)$ where $A$ is MRMT,
the eigenvalues of the matrix $\tilde A$ are distinct.
\end{lemma}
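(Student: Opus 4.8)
The plan is to reduce the claim to a statement purely about controllability and then prove its contrapositive via the Popov--Belevitch--Hautus (PBH) eigenvector test. First I would invoke Kalman's Theorem to rewrite minimality of $(A,B,C)$ as the conjunction of controllability of $(A,B)$ and observability of $(A,C)$; by Lemma~\ref{lemmacontrollable} these two properties are equivalent under Assumptions~\ref{H1}, so minimality is equivalent to controllability of $(A,B)$ alone. Since $A$ is MRMT, the block $\tilde A$ is diagonal, and its eigenvalues are exactly its diagonal entries $A_{ii}$ ($i=2\cdots n$); thus ``the eigenvalues of $\tilde A$ are distinct'' means precisely that these diagonal entries are pairwise distinct. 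I would then prove the contrapositive: if $A_{ii}=A_{jj}=:\mu$ for two indices $i\neq j$ in $\{2,\cdots,n\}$, then $(A,B)$ is not controllable.

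For the contrapositive, recall the PBH criterion: $(A,B)$ is controllable if and only if no nonzero vector $w$ satisfies simultaneously $A^{t}w=\mu w$ and $w^{t}B=0$. Since $B$ is the first canonical basis vector, the second condition reads simply $w_{1}=0$. Exploiting the arrow structure of the MRMT matrix, the equation $A^{t}w=\mu w$ read off row $k\in\{2,\cdots,n\}$ gives $A_{1k}w_{1}+A_{kk}w_{k}=\mu w_{k}$, which under $w_{1}=0$ reduces to $(A_{kk}-\mu)w_{k}=0$; hence $w_{k}=0$ whenever $A_{kk}\neq\mu$. The first row yields the single scalar constraint $\sum_{k=2}^{n}A_{k1}w_{k}=0$. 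I would then set every coordinate outside $\{i,j\}$ to zero and choose $w_{i}=A_{j1}$, $w_{j}=-A_{i1}$, for which this remaining constraint holds automatically. By irreducibility (item i of Assumptions~\ref{H1}) each immobile zone is connected to the mobile one, so $A_{i1},A_{j1}\neq 0$ and therefore $w\neq 0$; this $w$ defeats the PBH test, proving non-controllability.

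There is no single deep obstacle once these pieces are assembled; the points demanding care are (a) the reduction of minimality to controllability, which relies on Lemma~\ref{lemmacontrollable}, and (b) checking that the coupling coefficients $A_{i1},A_{j1}$ are genuinely nonzero so that the constructed $w$ is nontrivial, which is exactly where irreducibility enters. As an independent sanity check one may verify the result through the scalar transfer function $H(s)=\bigl(s-A_{11}-\sum_{k\geq 2}A_{1k}A_{k1}/(s-A_{kk})\bigr)^{-1}$: coinciding values $A_{ii}=A_{jj}$ merge two partial fractions into one, lowering the McMillan degree below $n$ and exhibiting non-minimality directly.
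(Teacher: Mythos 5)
Your proof is correct, but it takes a genuinely different route from the paper's. The paper argues by explicit aggregation: if $A_{ii}=A_{jj}=\lambda$ for $i\neq j$, it merges compartments $i$ and $j$ into a single immobile zone of volume $V_{i}+V_{j}$ with the volume-weighted concentration $S_{ij}=\frac{V_{i}S_{i}+V_{j}S_{j}}{V_{i}+V_{j}}$, writes out the resulting compartmental dynamics of dimension $n-1$ reproducing the same input-output behaviour, and concludes non-minimality directly from the definition of a minimal representation --- no controllability criterion is invoked. You instead prove non-controllability via the PBH eigenvector test, exhibiting the left eigenvector $w$ supported on $\{i,j\}$ with $w_{i}=A_{j1}$, $w_{j}=-A_{i1}$, and then apply Kalman's theorem. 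Your verification is sound: the arrow structure does give $(A^{t}w)_{k}=A_{1k}w_{1}+A_{kk}w_{k}$ for $k\geq 2$, your antisymmetric choice kills the first-row constraint $\sum_{k\geq 2}A_{k1}w_{k}=0$ automatically, and irreducibility indeed forces $A_{i1},A_{j1}\neq 0$ since a diagonal $\tilde A$ leaves node $1$ as the only possible neighbour of each immobile node. Two remarks on the comparison. First, your detour through Lemma~\ref{lemmacontrollable} is not needed for the contrapositive: Kalman's theorem alone gives that non-controllable implies non-minimal; the lemma only upgrades this to an equivalence. Second, what each approach buys: yours pinpoints the uncontrollable direction explicitly and uses only standard state-space machinery (PBH is not stated in the paper, but it is classical and in the cited reference of Kailath), and your transfer-function check via merged partial fractions gives an independent confirmation; the paper's lumping argument, by contrast, stays inside the class of positive compartmental realizations --- the reduced system is again a physically interpretable MRMT network --- which fits the paper's broader theme of physically meaningful equivalent representations and is essentially the same device reused in its Example 1.
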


\begin{proof}
The eigenvalues of $\tilde A$ for the MRMT structure are 
$\lambda_{i}=-d_{1i}/V_{i}$ ($i=2\cdots n$).
If there exist $i\neq j$ in $\{2,\cdots,n\}$ such that
$\lambda_{i}=\lambda_{j}=\lambda$, one can consider the variable
\[
S_{ij}=\frac{V_{i}}{V_{i}+V_{j}}S_{i}+\frac{V_{j}}{V_{i}+V_{j}}S_{j}
\]
instead of $S_{i}, S_{j}$ and write equivalently the dynamics in
dimension $n-1$:
\[
\begin{array}{rll}
\dot S_{1} & = & \ds \frac{Q}{V_{1}}(S_{in}-S_{1})+\sum_{k\geq
  2,k\neq i,j}
\frac{d_{1k}}{V_{1}}(S_{k}-S_{1}) + \frac{d_{1ij}}{V_{ij}}(S_{ij}-S_{1})\\
\vdots & & \vdots\\
\dot S_{k} & = & \ds \frac{d_{1k}}{V_{k}}(S_{1}-S_{k})
\qquad k \in \{2,\cdots,n\}\setminus\{i,j\}\\
\vdots & & \vdots \\
\dot S_{ij} & = & \ds \frac{d_{1ij}}{V_{ij}}(S_{1}-S_{ij})\\
\end{array}
\]
with $V_{ij}=V_{i}+V_{j}$ and $d_{1ij}=-(V_{i}+V_{j})\lambda$, which
show that $(A,B,C)$ is not minimal.
\end{proof}

In the coming sections, we address the equivalence problem of any
network structure that fulfill Assumption \ref{H1} with either a MRMT
or  a MINC structure. There are many known ways to diagonalize the sub-matrix
$\tilde A$ or tridiagonalize the whole matrix $A$ to obtain matrices
similar to $A$ with an arrow or tridiagonal structure. The
remarkable feature we prove is that there exist such 
transformations that preserve the signs of the entries of the
matrices (i.e. Assumption \ref{H1} is also fulfilled in the new coordinates)
so that the equivalent networks have a physical interpretation.\\

For a system $(A,B,C)$ that fulfills Assumption \ref{H1}, we adress in
the two coming Sections the problem of finding equivalent MRMT or MINC
structure (which are two other positive realizations \cite{BF04,NM05,B13} of the
input-output system).

\section{Equivalence with MRMT structure}
\label{secMRMT}

We first give sufficient conditions to obtain the equivalence with MRMT.

\begin{prop}
\label{mainprop}
Under Assumption \ref{H1}, take an invertible matrix $P$ such that
$P^{-1}\tilde A P=\Delta$, where $\Delta$ is diagonal.
If all the entries of the vector $P^{-1}\tilde\one$ are non-null and
the eigenvalues of $\tilde A$ are distinct, the
matrix
\[
R=\left[\begin{array}{cc}
1 & 0\\
0 & -P\Delta^{-1}\diag(P^{-1}A(2:n,1))
\end{array}\right]
\]
is invertible and such that $R^{-1}\!AR$ is a MRMT matrix.
\end{prop}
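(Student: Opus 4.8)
The plan is to exploit the block splitting of the mobile coordinate $1$ from the immobile coordinates $2,\dots,n$. Since $R$ is block diagonal with a $1$ in its top-left entry, one checks at once that $R^{-1}B=B$ and $CR=C$, so the transformation preserves the input and output vectors and only the conjugate $\bar A:=R^{-1}AR$ has to be analysed. Writing $W=-P\Delta^{-1}\diag(P^{-1}A(2:n,1))$ for the lower-right block of $R$, the lower-right block of $\bar A$ is $W^{-1}\tilde A W$. Because $W$ is $P$ multiplied on the right by a diagonal matrix, this block equals $\diag(\cdot)^{-1}(P^{-1}\tilde A P)\diag(\cdot)=\Delta$, which is diagonal. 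Hence the arrow (MRMT) shape of $\bar A$ is automatic, and the real work is to control the remaining blocks and to verify the sign conditions of Assumptions \ref{H1}.

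First I would simplify $W$ using the equilibrium relation $A\one=-B$ from Lemma \ref{lemmaAinversible}: its immobile rows give $A(2:n,1)=-\tilde A\tilde\one$, so $P^{-1}A(2:n,1)=-\Delta\,P^{-1}\tilde\one$ and therefore $W=P\diag(P^{-1}\tilde\one)$. This makes the hypotheses transparent: $\Delta$ is invertible because the eigenvalues of $\tilde A$ are negative (Lemma \ref{lemmaAtilde}), while $\diag(P^{-1}\tilde\one)$ is invertible precisely because $P^{-1}\tilde\one$ has no null entry, so $R$ is invertible. Moreover $W\tilde\one=P(P^{-1}\tilde\one)=\tilde\one$, hence $R\one=\one$, which immediately gives $\bar A\one=R^{-1}A\one=-R^{-1}B=-B$, i.e.\ the conservation/equilibrium property survives. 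A short computation of the lower-left block, $W^{-1}A(2:n,1)=-\diag(P^{-1}\tilde\one)^{-1}\Delta P^{-1}\tilde\one$, yields a vector whose $i$-th entry is $-\Delta_{ii}>0$; thus the first column of $\bar A$ below the diagonal is positive and each immobile row sums to zero.

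The decisive step is to show that $\bar A$ again satisfies Assumptions \ref{H1}. Rather than evaluate the top row $A_{1,2:n}W$ directly, I would certify the whole structure by transporting $V$ and $M$: set $\bar V=R^{t}VR$ and $\bar M=R^{t}MR$. Then $\bar V^{-1}\bar M=R^{-1}V^{-1}MR$ and $R^{-1}BB^{t}R=BB^{t}$, so $\bar A=-BB^{t}-\bar V^{-1}\bar M$ with $\bar M$ symmetric (because $M$ is). The key point is that $\bar V=\diag(V_{1},\,W^{t}\tilde V W)$ is in fact diagonal: with $W=P\diag(P^{-1}\tilde\one)$ this reduces to $P^{t}\tilde V P$ being diagonal, which holds because $\tilde A=-\tilde V^{-1}\tilde M$ is self-adjoint for the inner product $\langle x,y\rangle=x^{t}\tilde V y$ and, the eigenvalues being \emph{distinct}, its eigenvectors (the columns of $P$) are pairwise $\tilde V$-orthogonal; positivity of the diagonal of $\bar V$ follows from positive definiteness of $\tilde V$. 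Once $\bar V$ is positive diagonal and $\bar M$ is symmetric with $\bar M\one=R^{t}M\one=0$, the sign conditions drop out: off the diagonal one has $\bar M_{ij}=-\bar V_i\bar A_{ij}$, which vanishes in the immobile block (arrow shape), while the strictly positive lower-left entries of $\bar A$ force $\bar M_{j1}<0$, and symmetry then gives $\bar M_{1j}=\bar M_{j1}<0$, whence the top row $\bar A_{1j}=-\bar M_{1j}/V_{1}>0$ with no explicit computation; positivity of the diagonal of $\bar M$ and irreducibility (a star graph joining node $1$ to every other node) follow at once.

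The main obstacle — and where the hypotheses really bite — is the diagonality of $\bar V=R^{t}VR$, equivalently the $\tilde V$-orthogonality of the columns of $P$. This is exactly what \emph{distinct} eigenvalues of $\tilde A$ guarantee (with a repeated eigenvalue one could choose a non-$\tilde V$-orthogonal eigenbasis and $\bar V$ would not be diagonal), while the non-vanishing of $P^{-1}\tilde\one$ is what keeps $R$ invertible. With these two facts the remaining sign bookkeeping is routine.
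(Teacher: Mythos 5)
Your proof is correct and, at its core, it is the same proof as the paper's: your $W=P\,\diag(P^{-1}\tilde\one)$ is exactly the paper's matrix $\tilde R=PG$ (indeed $G=-\Delta^{-1}\diag(P^{-1}A(2:n,1))=\diag(P^{-1}\tilde\one)$, by the same identity $P^{-1}A(2:n,1)=-\Delta P^{-1}\tilde\one$ that you use), and your decisive fact --- that $W^{t}\tilde V W$ is positive diagonal because distinct eigenvalues force the columns of $P$ to be pairwise $\tilde V$-orthogonal --- is precisely the paper's claim that $\tilde R^{\prime}\tilde V\tilde R=D^{2}$, which it proves by passing to the symmetric matrix $\tilde V^{1/2}\tilde A\tilde V^{-1/2}$ and its unitary diagonalization; the two arguments are the same statement in different clothing. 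Where you genuinely differ is in the packaging of the sign verification: the paper computes the top row $A(1,2:n)\tilde R$ directly from the symmetry of $VA$, whereas you transport the structural data by congruence, $\bar V=R^{t}VR$ and $\bar M=R^{t}MR$, check that $\bar V$ is positive diagonal (the same key fact) and that $\bar M$ is symmetric with $\bar M\one=0$, and then read off the positivity of the top row from the symmetry $\bar M_{1j}=\bar M_{j1}$ with no further computation. This repackaging buys two things: it exhibits explicitly the pair $(\bar V,\bar M)$ that Assumptions \ref{H1} require for $R^{-1}AR$, which the paper leaves implicit, and it replaces the paper's loose assertion that irreducibility ``is preserved by the change of coordinates'' (not true for general similarity transformations) by the clean observation that the resulting star graph, having all first-row and first-column couplings nonzero, is strongly connected. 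So: same transformation, same key lemma, but your verification of Assumptions \ref{H1} for the transformed matrix is tighter than the paper's.
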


\begin{proof}
Take a general matrix $A$ that fulfills Assumption \ref{H1}.
From Lemma \ref{lemmaAtilde}, $\tilde A$ is diagonalizable with $P$ such
that $P^{-1}\tilde A P=\Delta$ where $\Delta$ is a diagonal matrix.
Let $G$ be the diagonal matrix
\[
G=-\Delta^{-1}\diag(P^{-1}A(2:n,1))
\]
and define $\tilde R=PG$.\\
Notice that one has $A\one=-B$ from Assumptions \ref{H1}.
The $n-1$ lines of this equality
gives $A(2:n,1)+\tilde A\tilde\one=0$ and one can write
\[
\begin{array}{ll}
 & P^{-1}A(2:n,1)+P^{-1}\tilde A\tilde\one=0\\
\Leftrightarrow & P^{-1}A(2:n,1)+P^{-1}\tilde A P
P^{-1}\tilde\one=0\\
\Leftrightarrow & P^{-1}A(2:n,1)+\Delta P^{-1}\tilde\one=0
\end{array}
\]
Thus having  all the entries of the vector $P^{-1}A(2:n,1)$ non-null
is equivalent to have  all the entries of the vector
$P^{-1}\tilde\one$ non null.\\

All the entries of the vector $P^{-1}A(2:n,1)$ being non-null,
$\tilde R$ is invertible and one has
\[
\tilde R^{-1}\tilde A\tilde R=G^{-1}P^{-1}\tilde A PG=G^{-1}\Delta G = \Delta.
\]
One can then consider the matrix $R \in {\cal M}_{n,n}$ defined as
\[
R=\left[\begin{array}{cc}
1 & 0\\
0 & \tilde R
\end{array}\right]
\quad \mbox{with} \quad
R^{-1}=\left[\begin{array}{cc}
1 & 0\\
0 & \tilde R^{-1}
\end{array}\right]
\]
One has
\[
R^{-1}AR=\left[\begin{array}{cc}
A_{11} & A(1,2:n)\tilde R\\[4mm]
\tilde R^{-1}A(2:n,1) & \Delta
\end{array}\right]
\]
We show now that the matrix $R^{-1}\!AR$ fulfills Assumptions \ref{H1}.\\

One has straightforwardly
\[
R^{-1}AR=-B^tB-R^{-1}V^{-1}MR \ .
\]
As the irreducibility of the matrix $V^{-1}M$ is preserved by the change
of coordinates given by $X\mapsto R^{-1}X$, Property i. is fulfilled.\\

The diagonal terms of $-(R^{-1}AR+BB^t)$ are $-A_{11}-1$ (which is
positive) and the diagonal of $-\Delta$ which is also positive. Property
ii. is thus satisfied.\\ 

We have now to prove that column $\tilde R^{-1}A(2:n,1)$ and row
$A(1,2:n)\tilde R$ are positive to show Property iii.
From the definition of the matrix $G$, one has
\[
\Delta=-G^{-1}\diag(P^{-1}A(2:n,1))=-\diag(\tilde R^{-1}A(2:n,1))
\]
and thus one has
\[
\tilde R^{-1}A(2:n,1)=-\Delta\tilde\one
\]
As the diagonal terms of $\Delta$ are negative, we deduce that the
vector $\tilde R^{-1}A(2:n,1)$ is positive.
As the matrix $V\!A$ is symmetric, one can write
$V_{11}A(1,2:n)=A(2:n,1)^{\prime}\tilde V$
and then
\[
\left(V_{11}A(1,2:n)\tilde R\right)^{\prime}=\tilde
  R^{\prime}\tilde VA(2:n,1)=-\tilde R^{\prime}\tilde V\tilde R\Delta\tilde\one
\]
Notice that the matrix $\tilde R^{\prime}\tilde V\tilde R$ can be
written $T'T$ with $T=\tilde V^{1/2}\tilde R$, and that the matrix $T$
diagonalizes the matrix $S=\tilde V^{1/2}\tilde A \tilde V^{-1/2}$:
\[
T^{-1}ST=\tilde R^{-1}\tilde A\tilde R=\Delta
\]
The matrix $S$ being symmetric, it is also diagonalizable with a
unitary matrix $U$ such that $U'SU=\Delta$.
As the eigenvalues of $\tilde A$ are distinct, their eigenspaces are
one-dimensional and consequently the columns of any
matrix that diagonalizes $S$ into $\Delta$ have to be
proportional to corresponding eigenvectors.
So the matrix $T$ is of the form $UD$ where $D$ is a non-singular
diagonal matrix. This implies that the matrix 
$\tilde R^{\prime}\tilde V\tilde R$ is equal to $D^2$, which is a
positive diagonal matrix. As $-\Delta\tilde\one$ is a positive vector, we deduce that the
entries of $A(1,2:n)\tilde R$ are positive.\\

Notice that $\tilde\one$ is necessarily an eigenvector of $\tilde R^{-1}$
(or $\tilde R$) for the eigenvalue $1$:
as one has $A\one=-B$, one has also $A(2:n,1)=-\tilde A\tilde\one$ and then
\[
\tilde R^{-1}\tilde\one=-\tilde R^{-1}\tilde A^{-1}A(2:n,1)
=-\Delta^{-1}\tilde R^{-1}A(2:n,1)
=-\Delta^{-1}\diag(\tilde R^{-1}A(2:n,1))\tilde\one
=\tilde\one
\]
Finally, one has
\[
(R^{-1}AR+BB^t)\one=R^{-1}A\one+B=-R^{-1}B+B=0
\]
which proves that Property iv. is verified.

\end{proof}

We come back to the condition required by Proposition
\ref{mainprop} and show that it is necessarily fulfilled for
minimal representations (we recall from Lemma \ref{lemmacontrollable}
that controllability implies a minimal representation in our framework).

\begin{prop}
\label{propminimal}
Under Assumptions \ref{H1}, the entries of the vector
$P^{-1}\tilde \one$ are non null for any $P$ such that
$P^{-1}\tilde AP=\Delta$ with $\Delta$ diagonal, 
when the pair $(A,B)$ is controllable.
Furthermore, the eigenvalues of $\tilde A$ are distinct.
\end{prop}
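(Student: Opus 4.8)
===BEGIN OUTPUT===

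The plan is to prove the two assertions in the order they are stated, and to derive both from the controllability hypothesis via the tridiagonalization result of Proposition \ref{PropGolub} together with the structural identities already established.

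First I would establish that the eigenvalues of $\tilde A$ are distinct. By Lemma \ref{lemmaAtilde} the matrix $\tilde A$ is diagonalizable with real negative eigenvalues, so the only obstruction to distinctness is a repeated eigenvalue with a multi-dimensional eigenspace. The key observation is that controllability of $(A,B)$ transfers to a controllability-type property of the pair $(\tilde A, A(2\!:\!n,1))$. Indeed, since $B=e_1$ and $A$ has the block form with $\tilde A$ in the lower-right corner and $A(2\!:\!n,1)$ in the lower-left, a direct computation of the Krylov vectors $B, AB, \dots, A^{n-1}B$ shows that the lower $(n-1)$-blocks of $AB, A^2B, \dots$ span the same space as the Krylov sequence generated by $\tilde A$ acting on $A(2\!:\!n,1)$. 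Controllability of $(A,B)$ (rank $n$ of ${\cal C}_{A,B}$) then forces $[A(2\!:\!n,1), \tilde A A(2\!:\!n,1), \dots, \tilde A^{n-2}A(2\!:\!n,1)]$ to have rank $n-1$, i.e.\ the pair $(\tilde A, A(2\!:\!n,1))$ is controllable in dimension $n-1$. A single-input pair with a repeated eigenvalue of a diagonalizable matrix can never be controllable (the cyclic vector would have to lie in a single one-dimensional eigenspace of a two-dimensional eigenspace, which is impossible), so all eigenvalues of $\tilde A$ must be simple.

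Next I would treat the non-vanishing of the entries of $P^{-1}\tilde\one$. Given the distinctness just proved, let $P$ be any diagonalizing matrix, $P^{-1}\tilde A P=\Delta$. I would use the identity $A(2\!:\!n,1)=-\tilde A\tilde\one$, which comes from the $n-1$ lower rows of $A\one=-B$ (established in Lemma \ref{lemmaAinversible} and reused in Proposition \ref{mainprop}). Applying $P^{-1}$ gives $P^{-1}A(2\!:\!n,1)=-\Delta\, P^{-1}\tilde\one$; since $\Delta$ is invertible with nonzero diagonal entries, the entries of $P^{-1}\tilde\one$ vanish exactly where the entries of $P^{-1}A(2\!:\!n,1)$ vanish. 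So it suffices to show no entry of $P^{-1}A(2\!:\!n,1)$ is zero. Writing the controllability matrix of $(\tilde A, A(2\!:\!n,1))$ in the eigenbasis of $\tilde A$, its rank-$(n-1)$ condition factors through a Vandermonde matrix in the distinct eigenvalues times $\diag(P^{-1}A(2\!:\!n,1))$. Since the Vandermonde matrix is invertible precisely because the eigenvalues are distinct, full rank of the controllability matrix is equivalent to every entry of $P^{-1}A(2\!:\!n,1)$ being nonzero. Controllability thus delivers exactly the required non-vanishing.

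The main obstacle I anticipate is making the reduction from controllability of $(A,B)$ to controllability of $(\tilde A, A(2\!:\!n,1))$ fully rigorous: one must track carefully how the first coordinate of each Krylov vector interacts with the lower block, since $A_{11}$ feeds back into subsequent iterates. The cleanest route is to exploit the block-triangular-like structure together with $B=e_1$ and argue inductively that the span of the lower blocks of $\{A^kB\}_{k=0}^{n-1}$ equals the span of $\{\tilde A^k A(2\!:\!n,1)\}_{k=0}^{n-2}$; alternatively one can invoke the Vandermonde factorization directly on ${\cal C}_{A,B}$ in an eigenbasis adapted to the block structure, which simultaneously yields the distinctness and the non-vanishing from the single requirement that ${\cal C}_{A,B}$ be invertible. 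I expect the Vandermonde factorization to be the technical heart, as it is where distinct eigenvalues and nonzero projections onto $\tilde\one$ both appear as necessary and sufficient conditions for the rank to be maximal.

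===END OUTPUT===
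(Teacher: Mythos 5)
Your proposal is correct and follows essentially the same route as the paper: the paper likewise reduces controllability of $(A,B)$ to controllability of the immobile-block pair (it uses $(\tilde A,\tilde\one)$, which differs from your $(\tilde A,A(2\!:\!n,1))$ only by the invertible factor $-\tilde A$, since $A(2\!:\!n,1)=-\tilde A\tilde\one$) via exactly the inductive block-Krylov computation you sketch, and then factors the reduced controllability matrix as $\diag(P^{-1}\tilde\one)\vand(\lambda_{1},\cdots,\lambda_{n-1})$ in the eigenbasis. The only cosmetic difference is that the paper extracts distinctness of the eigenvalues and non-vanishing of the entries simultaneously from this single Vandermonde factorization, whereas you first obtain distinctness from the standard fact that a diagonalizable single-input pair with a repeated eigenvalue cannot be controllable, and then use the Vandermonde argument for the non-vanishing alone.
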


\begin{proof}
From Lemma \ref{lemmaAtilde}, $\tilde A$ is diagonalizable with $P$ such
that $P^{-1}\tilde AP=\Delta$ where $\Delta$ is a diagonal matrix.
Posit $X=P^{-1}\tilde\one$. One has
\[
P^{-1}\tilde A^kP=\Delta^k \; \Longrightarrow \;  
P^{-1}\tilde A^k\tilde\one=\Delta^k X \ , \quad k=1,\cdots
\]
This implies
\[
P^{-1}\left[\tilde\one,\tilde A\tilde\one,\cdots,\tilde A^{n-1}\tilde
  \one\right]=\diag(X)Vand(\lambda_{1},\cdots,\lambda_{n})
\]
or equivalently
\[
P^{-1}{\cal C}_{\tilde A,\tilde\one}=
\diag(X)Vand(\lambda_{1},\cdots,\lambda_{n-1})
\]
We deduce that when ${\cal C}_{\tilde A,\tilde\one}$ is full rank,
$\diag(X)$ and $Vand(\lambda_{1},\cdots,\lambda_{n-1})$ are
non-singular, that is all the entries of $X$ are non-null and the
eigenvalues $\lambda_{1},\cdots,\lambda_{n-1}$ are distinct.
We show now that the controllability of the pair $(A,B)$ implies that
the pair $(\tilde A,\tilde\one)$ is also controllable.\\

From the property $A\one=-B$, one can write
\[
A=\left[\begin{array}{rl}
A_{11} & L\\
-\tilde A\tilde \one & \tilde A
\end{array}\right]
\]
where $L$ is a row vector of length $n-1$. Then one has
\[
A\one=\left[\begin{array}{c}-1\\\tilde 0\end{array}\right] \ ,
\quad
A^2\one =\left[\begin{array}{c}-A_{11}\\\tilde A\tilde\one
    \end{array}\right] \ ,
\quad
A^3\one =\left[\begin{array}{c}-A_{11}^2+L\tilde A\tilde\one\\
A_{11}\tilde  A\tilde\one+\tilde A^2\one
    \end{array}\right]
\]
that are of the form
\[
A^k\one =\left[\begin{array}{c} \alpha_{k}\\
P_{k}
\end{array}\right] \mbox{ with }
P_{k}=\tilde A^{k-1}\one+\ds\sum_{j\leq k-2} \beta_{kj}\tilde A^j\one
\ ,
\mbox{ for } k=2,3
\]
By recursion, one obtains
\[
P_{k+1}=-\alpha_{k}\tilde A\tilde\one+\tilde A^{k}\one+\ds\sum_{j\leq k-2}
\beta_{kj}\tilde A^{j+1}\one
=\tilde A^{k}\one+\ds\sum_{j\leq k-1} \beta_{k+1,j}\tilde A^j\one
\ , \mbox{ for } k=2,\cdots
\]
Then, one can write
\[
-{\cal C}_{A,B}={\cal C}_{A,A\one}=\left[\begin{array}{llll}
\alpha_{1} & \alpha_{2} & \cdots & \alpha_{n}\\
\tilde 0  & P_{2} & \cdots & P_{n}
\end{array}\right]
\]
from which one deduces
\[
\rk ({\cal C}_{A,B})=n \; \Rightarrow \;
\rk(P_{2},\cdots,P_{n})=n-1 \; \Rightarrow \;
\rk(\tilde
A\one,\cdots\tilde A^{n-1}\one)=n-1
\]
One can also write
$[\tilde A\one,\cdots\tilde A^{n-1}\one]=\tilde A {\cal C}_{\tilde
  A,\tilde\one}$ and as $\tilde A$ is invertible (Lemma
\ref{lemmaAtilde}), we finally obtain that ${\cal C}_{\tilde A,\tilde\one}$
is full rank.
\end{proof}

Finally, Propositions \ref{mainprop} and \ref{propminimal} lead to the
main result of this section.

\begin{theorem}
\label{ThMRMT}
Any minimal representation $(A,B,C)$ that fulfills Assumptions \ref{H1} is
equivalent to a MRMT structure.
\end{theorem}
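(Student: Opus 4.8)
The plan is to read this theorem as the assembly of the two preceding propositions, glued together by Kalman's minimality criterion. First I would use Kalman's Theorem: since $(A,B,C)$ is assumed minimal, the pair $(A,B)$ must be controllable (and $(A,C)$ observable, though only controllability will be needed in what follows; under Assumptions \ref{H1} these two properties are in any case equivalent by Lemma \ref{lemmacontrollable}). This is the step that converts the abstract minimality hypothesis into the concrete algebraic condition that Proposition \ref{propminimal} consumes.

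Next, with controllability of $(A,B)$ secured, I would fix any invertible $P$ that diagonalizes $\tilde A$, i.e.\ $P^{-1}\tilde A P=\Delta$ with $\Delta$ diagonal, which exists by Lemma \ref{lemmaAtilde}. Proposition \ref{propminimal} then delivers exactly the two hypotheses required downstream: all entries of the vector $P^{-1}\tilde\one$ are non-null, and the eigenvalues of $\tilde A$ are pairwise distinct. Feeding these into Proposition \ref{mainprop} produces the explicit invertible matrix $R$ for which $R^{-1}AR$ is a MRMT matrix and still fulfils Assumptions \ref{H1}. At this point the construction of the MRMT realization is complete; it only remains to verify that it is genuinely \emph{equivalent} to the original system.

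Finally I would check that the similarity $X\mapsto R^{-1}X$ leaves the input and output channels untouched, so that it yields the same input-output operator rather than merely a similar state matrix. Because $R$ has the block form $\diag(1,\tilde R)$, a one-line computation gives $R^{-1}B=B$ and $CR=C$; hence the triplet $(R^{-1}AR,\,R^{-1}B,\,CR)=(R^{-1}AR,\,B,\,C)$ is related to $(A,B,C)$ by a change of state coordinates and therefore realizes the same operator ${\cal F}_{A,B,C}$. Since $R^{-1}AR$ is MRMT, this is precisely the desired equivalent MRMT structure. I do not expect a serious obstacle here, as all the analytic and combinatorial content has already been discharged in Propositions \ref{mainprop} and \ref{propminimal} and in Lemma \ref{lemmacontrollable}; the only point demanding care is the logical bookkeeping, namely confirming that ``minimal'' really delivers ``controllable'' via Kalman, and that the constructed $R$ fixes $B$ and $C$ so that \emph{equivalence} (equality of ${\cal F}$), and not just matrix similarity, is what is obtained.
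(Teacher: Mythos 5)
Your proposal is correct and takes essentially the same route as the paper: Theorem \ref{ThMRMT} is stated there as a direct consequence of Propositions \ref{mainprop} and \ref{propminimal}, with the Kalman-minimality-to-controllability step and the block structure of $R$ left implicit. Your explicit check that $R^{-1}B=B$ and $CR=C$ (so that one obtains equality of the input-output operators, not merely similarity of state matrices) is exactly the bookkeeping the paper omits.
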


\section{Equivalence with MINC structure}
\label{secMINC}

Take a matrix $A$ that fulfills Assumption \ref{H1} and such that that
pair $(A,B)$ is controllable. As we have already shown that such representation $(A,B,C)$ is minimal and equivalent to a MRMT configuration, we can
assume without any loss of generality that the matrix $A$ has the
structure
\[
A=\left[\begin{array}{cc}
A_{11} & A(1,2:n)\\[2mm]
A(2:n,1) & \Delta
\end{array}\right]
\]
where $\Delta$ is a square diagonal matrix (of size $n-1$) with
distinct negative eigenvalues. We denote by $V$ the diagonal matrix of
the volumes associated to the matrix $A$ with $V_{1}=1$, as given by Lemma
\ref{lemmaV}
We shall consider a tridiagonalization of this
matrix. For this purpose, we recall the Lanczos algorithm.

\begin{defi} (Lanczos algorithm)
Let $S$ be a symmetric matrix of size $m$ and $q_{1}$ be a vector of
norm equal to one. One
defines the sequence $\pi_{k}=(\beta_{k},q_{k},r_{k})$ as follows
\begin{itemize}
\item $\beta_{0}=0$, $q_{0}=0$, $r_{0}=q_{1}$,
\item if $\beta_{k} \neq 0$, define $q_{k+1}=r_{k}/\beta_{k}$,
  $\alpha_{k+1}=q_{k+1}'Sq_{k+1}$,
  $r_{k+1}=(S-\alpha_{k+1}I)q_{k+1}-\beta_{k}q_{k}$ and $\beta_{k+1}=||r_{k+1}||$.
\end{itemize}
\end{defi}

One can straightforwardly check that the vectors $q_{k}$ provided by
this algorithm are orthogonal and of norm equal to one.
The algorithm stops for $k<m$ (``breakdown'') or $k=m$.
A {\em non-breakdown condition} for this algorithm is given in \cite[Th 10.1.1]{GV13}:

\begin{prop}
\label{Lanczos_conv}
When $\rk({\cal C}_{S,q_{1}})=m$, the sequence
$\pi_{k}$ is defined up to $k=m$, and the matrix $Q=[q_{1}\cdots
q_{m}]$ verifies
\[
Q'AQ=\left[\begin{array}{ccccc}
\alpha_{1} & \beta_{1} & & & \makebox(-10,-10){\text{\em \Large 0}}\\
\beta_{1} & \ddots & \ddots & & \\
  & \ddots & \ddots & \ddots & \\
  & & \ddots & \ddots & \beta_{m-1}\\
\makebox(20,20){\text{\em \Large 0}} & & & \beta_{m-1} & \alpha_{m}
\end{array}\right]
\]
where the numbers $\beta_{i}$ $(i=1\cdots m)$ are positive.
\end{prop}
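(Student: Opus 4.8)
The plan is to run the standard induction underlying the Lanczos process, establishing at once that the generated vectors form an orthonormal basis of the successive Krylov subspaces and that no breakdown can occur before step $m$. Write ${\cal K}_{k}=\mbox{span}\{q_{1},Sq_{1},\dots,S^{k-1}q_{1}\}$ for the $k$-th Krylov subspace, so that by definition $\rk({\cal C}_{S,q_{1}})=\dim{\cal K}_{m}$. First I would prove by induction on $k$ that, as long as $\beta_{1},\dots,\beta_{k-1}$ are non-zero, the vectors $q_{1},\dots,q_{k}$ are orthonormal and span ${\cal K}_{k}$. The span part is immediate from the recurrence, since $q_{k+1}$ is a normalized combination of $Sq_{k}$, $q_{k}$ and $q_{k-1}$ and hence lies in ${\cal K}_{k+1}$, while conversely each $S^{j}q_{1}$ is recovered from $q_{1},\dots,q_{j+1}$. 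The orthonormality is the point already noted as straightforward before the statement: the choice $\alpha_{k+1}=q_{k+1}'Sq_{k+1}$ and the subtraction of $\beta_{k}q_{k}$ annihilate the components of $r_{k+1}$ along $q_{k+1}$ and $q_{k}$, while for an earlier index $j<k$ one uses the symmetry of $S$ to write $q_{j}'Sq_{k+1}=(Sq_{j})'q_{k+1}$ with $Sq_{j}\in{\cal K}_{j+1}\subseteq\mbox{span}\{q_{1},\dots,q_{k}\}$, which is orthogonal to $q_{k+1}$ by the inductive hypothesis. This is precisely where symmetry makes a three-term recurrence sufficient, and it is the only delicate point of the induction.

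The crux is then the non-breakdown argument driven by the rank hypothesis. The dimensions $\dim{\cal K}_{k}$ are nondecreasing in $k$ and satisfy $\dim{\cal K}_{m}=\rk({\cal C}_{S,q_{1}})=m$; since each step can raise the dimension by at most one and the sequence starts at $\dim{\cal K}_{1}=1$, it must in fact increase by exactly one at every step, so $\dim{\cal K}_{k}=k$ for all $k\leq m$. Suppose some $\beta_{k}=0$ with $k<m$. Then $r_{k}=0$, i.e. $Sq_{k}=\alpha_{k}q_{k}+\beta_{k-1}q_{k-1}\in{\cal K}_{k}$; combined with $Sq_{j}\in{\cal K}_{j+1}\subseteq{\cal K}_{k}$ for $j<k$, this makes ${\cal K}_{k}$ invariant under $S$ and forces ${\cal K}_{k+1}={\cal K}_{k}$, contradicting the strict growth just established. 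Hence $\beta_{k}\neq0$ for $k=1,\dots,m-1$, the recurrence is defined up to $k=m$, and being norms the $\beta_{k}$ are strictly positive, as claimed.

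It remains only to read off the tridiagonal form. Rearranging the recurrence gives $Sq_{j}=\beta_{j}q_{j+1}+\alpha_{j}q_{j}+\beta_{j-1}q_{j-1}$ for each $j$, and taking the inner product with $q_{i}$ and invoking orthonormality yields $q_{i}'Sq_{j}=\alpha_{j}$ if $i=j$, $\beta_{j}$ if $i=j+1$, $\beta_{j-1}$ if $i=j-1$, and $0$ otherwise. Collecting these entries shows that $Q'SQ$ is exactly the symmetric tridiagonal matrix in the statement, with the positive numbers $\beta_{i}$ on the sub- and super-diagonals. The main obstacle throughout is the orthogonality step of the first paragraph: once the identification $\mbox{span}\{q_{1},\dots,q_{k}\}={\cal K}_{k}$ is secured through the symmetry of $S$, both the non-breakdown conclusion and the tridiagonalization are essentially bookkeeping.
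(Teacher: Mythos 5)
Your proof is correct, and there is in fact no in-paper proof to compare it against: the paper quotes this proposition verbatim from \cite[Th 10.1.1]{GV13} as a known non-breakdown condition. Your argument --- the induction showing that $q_{1},\dots,q_{k}$ are orthonormal and span the Krylov subspace ${\cal K}_{k}$ (with symmetry of $S$ handling orthogonality against earlier $q_{j}$), the strict growth $\dim {\cal K}_{k}=k$ forced by $\rk({\cal C}_{S,q_{1}})=m$ ruling out any breakdown $\beta_{k}=0$ for $k<m$, and the tridiagonal entries read off the three-term recurrence --- is precisely the standard proof of that cited theorem, so it matches the intended justification.
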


\begin{lemma}
\label{lemLanczos}
The Lanczos algorithm applied to the matrix $\Delta$ with
$q_{1}=A(2:n,1)/||A(2:n,1)||$ provides an orthogonal unitary matrix
$Q$ such that $Q'\Delta Q$ is symmetric tridiagonal with positive terms on the
sub- (or super-) diagonal.
\end{lemma}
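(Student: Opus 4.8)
The plan is to invoke Proposition~\ref{Lanczos_conv} directly, which already does almost all the work; the main task is to verify its single hypothesis, namely that the controllability matrix ${\cal C}_{\Delta,q_{1}}$ has full rank $m=n-1$, and then to identify the positivity of the sub-diagonal entries. Recall that in this section we have reduced to the case where $\tilde A=\Delta$ is diagonal with distinct negative eigenvalues, and where the column vector $A(2:n,1)$ has all entries non-null (this follows from Proposition~\ref{propminimal} and the construction of the MRMT form in Proposition~\ref{mainprop}, since $\tilde R^{-1}A(2:n,1)=-\Delta\tilde\one$ has non-null entries and the MRMT reduction preserves this).

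First I would check the non-breakdown condition. Setting $q_{1}=A(2:n,1)/\|A(2:n,1)\|$ and applying Lemma~\ref{lemmaAtilde}'s conclusion that $\Delta$ has distinct eigenvalues, I would observe that ${\cal C}_{\Delta,q_{1}}=[q_{1},\Delta q_{1},\cdots,\Delta^{m-1}q_{1}]$ can be written as $\diag(q_{1})\,\vand(\lambda_{1},\cdots,\lambda_{m})$ after conjugating into the eigenbasis of $\Delta$ (exactly the same Vandermonde factorization used in the proof of Proposition~\ref{propminimal}). Since $\Delta$ is already diagonal, the eigenbasis is the standard one, so $q_{1}$ is itself the relevant coordinate vector. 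The Vandermonde matrix is non-singular because the $\lambda_{i}$ are distinct, and $\diag(q_{1})$ is non-singular because every entry of $A(2:n,1)$ — hence of $q_{1}$ — is non-zero. Therefore $\rk({\cal C}_{\Delta,q_{1}})=m=n-1$, and Proposition~\ref{Lanczos_conv} applies: the matrix $Q=[q_{1}\cdots q_{m}]$ is orthogonal (the vectors $q_{k}$ are orthonormal by construction, as noted just after the algorithm's definition) and $Q'\Delta Q$ is symmetric tridiagonal.

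Then I would record the positivity claim. The tridiagonal entries $\beta_{i}$ produced by the Lanczos recursion are defined as norms $\beta_{k+1}=\|r_{k+1}\|$, hence non-negative, and Proposition~\ref{Lanczos_conv} asserts precisely that under the full-rank hypothesis they are in fact strictly positive for $i=1,\cdots,m-1$ (otherwise a breakdown would occur before step $m$). Since $\beta_{i}$ sits on both the sub- and super-diagonal of the symmetric matrix $Q'\Delta Q$, this gives positive entries on the sub-diagonal, as required. The matrix $Q$ is orthogonal unitary and $Q'\Delta Q$ is symmetric tridiagonal, completing the statement.

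The \emph{hard part}, such as it is, is purely bookkeeping: confirming that the controllability/non-breakdown hypothesis of Proposition~\ref{Lanczos_conv} is inherited from the standing controllability assumption on $(A,B)$, via the chain $(A,B)$ controllable $\Rightarrow$ $({\tilde A},\tilde\one)$ controllable $\Rightarrow$ all entries of $A(2:n,1)$ non-null $\Rightarrow$ all entries of $q_{1}$ non-null. Everything after that is the Vandermonde argument and a direct citation. I do not expect any genuine analytic obstacle here; the substantive positivity work (turning this tridiagonal reduction into a full MINC matrix satisfying Assumption~\ref{H1}) is deferred to the subsequent results of the section rather than to this lemma.
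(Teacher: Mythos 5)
Your proof is correct and follows essentially the same route as the paper's: factor the Krylov matrix as ${\cal C}_{\Delta,q_{1}}=\diag(q_{1})\,\vand(\lambda_{1},\cdots,\lambda_{n-1})$, use distinctness of the $\lambda_{i}$ plus non-nullness of the entries of $q_{1}$ to get full rank, and then cite Proposition~\ref{Lanczos_conv} for the non-breakdown and the positivity of the $\beta_{i}$ (the paper obtains the non-nullness of $q_{1}$ more directly than your controllability chain, from $A\one=-B$, which gives $q_{1}\propto-\Delta\tilde\one$, i.e.\ entries proportional to the non-zero $-\lambda_{i}$). One citation slip: distinctness of the eigenvalues is not the conclusion of Lemma~\ref{lemmaAtilde} (which gives only real negative eigenvalues) but comes from the section's standing reduction via Proposition~\ref{propminimal}, as you correctly recall at the start of your argument.
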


\begin{proof}
As the matrix $\Delta$ is diagonal, one has
\[
{\cal
  C}_{\Delta,q_{1}}=\vand(\lambda_{1},\cdots,\lambda_{n-1})\diag(q_{1})
\]
where $\lambda_{i}$ $(i=1\cdots n-1)$ are the diagonal elements of
$\Delta$.
Furthermore, as Assumptions \ref{H1} imply the equality
$A\one=-B$, one has
\[
q_{1}=-\frac{1}{\sqrt{\sum_{i=1}^{n-1}\lambda_{i}}}\left[\begin{array}{c}
\lambda_{1}\\
\vdots\\
\lambda_{n-1}
\end{array}\right]
\]
As $\lambda_{i}$ are all distinct and non null, $q_{1}$ is a non null
vector and the matrices
$\vand(\lambda_{1},\cdots,\lambda_{n-1})$, $\diag(q_{1})$ are full
rank. Therefore ${\cal C}_{\Delta,q_{1}}$ is full rank and Proposition
\ref{Lanczos_conv} can be used.
\end{proof}

Let us recall the well known Cholesky decomposition of symmetric
matrix.

\begin{theorem}
\label{Cholesky}
Let $S$ be a symmetric definite positive matrix. Then, there exists an
unique upper triangular matrix $U$ with positive diagonal entries such
that $S=U'U$.
\end{theorem}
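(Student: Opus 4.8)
The plan is to establish existence by induction on the size $m$ of $S$ and to obtain uniqueness by a separate algebraic argument. For $m=1$ the matrix reduces to a single scalar $s_{11}>0$ (positivity because $s_{11}$ equals $S$ tested against the first basis vector), and $U=[\sqrt{s_{11}}]$ is the unique admissible factor. For the inductive step I would partition
\[
S=\left[\begin{array}{cc} s_{11} & b'\\ b & \hat S\end{array}\right]
\]
where $s_{11}>0$, $b\in\Rset^{m-1}$ and $\hat S$ is symmetric of size $m-1$, and look for $U$ of the matching block form with first diagonal entry $u_{11}=\sqrt{s_{11}}$, first row $[\,u_{11},\,b'/u_{11}\,]$, and lower-right block an upper triangular matrix $\hat U$ with positive diagonal. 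Expanding $U'U$ shows that $S=U'U$ holds if and only if $\hat U'\hat U=\hat S-bb'/s_{11}$, i.e. $\hat U$ is a Cholesky factor of the Schur complement of $s_{11}$ in $S$.

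The main obstacle, and the only genuinely non-routine point, is to certify that this Schur complement $\hat S-bb'/s_{11}$ is again symmetric definite positive, so that the induction hypothesis applies to it. Symmetry is immediate; for definiteness I would fix an arbitrary non-zero $y\in\Rset^{m-1}$ and test $S$ against the vector
\[
x=\left[\begin{array}{c} -b'y/s_{11}\\ y\end{array}\right]\neq 0 .
\]
A direct computation gives $x'Sx=y'(\hat S-bb'/s_{11})y$, and since $S$ is definite positive and $x\neq 0$ this quantity is strictly positive; as $y$ was arbitrary the Schur complement is definite positive. The induction hypothesis then furnishes a $\hat U$, which completes the construction of $U$, whose diagonal entries $u_{11}=\sqrt{s_{11}}$ together with those of $\hat U$ are all positive by construction.

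For uniqueness I would argue globally rather than through the recursion. Suppose $S=U_1'U_1=U_2'U_2$ with $U_1,U_2$ upper triangular having positive diagonals; both are invertible since $\det(U_i)^2=\det S>0$. Setting $Q=U_1U_2^{-1}$, the identity $U_1'U_1=U_2'U_2$ rearranges to $Q'Q=I$, so $Q$ is orthogonal. But $Q$ is a product of upper triangular matrices, hence upper triangular, and an orthogonal upper triangular matrix is necessarily diagonal with entries $\pm1$. Its diagonal entries are the ratios of the (positive) diagonal entries of $U_1$ and $U_2$, hence positive, forcing $Q=I$ and therefore $U_1=U_2$. This yields both the existence and the uniqueness of the Cholesky factor.
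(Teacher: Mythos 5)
Your proof is correct, but there is nothing in the paper to compare it against: Theorem \ref{Cholesky} is stated there as a recalled classical result (the Cholesky decomposition), introduced with ``Let us recall the well known Cholesky decomposition of symmetric matrix,'' and no proof is supplied. Your argument is the standard textbook proof and is complete. For existence, the induction on the dimension works precisely because the Schur complement $\hat S - bb'/s_{11}$ is again symmetric positive definite, which you verify correctly by testing $S$ against the vector $x$ whose first entry is $-b'y/s_{11}$ and whose remaining block is $y$: a direct computation indeed gives $x'Sx = y'(\hat S - bb'/s_{11})y > 0$ for $y \neq 0$. For uniqueness, the matrix $Q = U_1U_2^{-1}$ is orthogonal (since $Q'Q = I$ follows from $U_1'U_1 = U_2'U_2$) and upper triangular; its inverse $Q'$ is then both upper and lower triangular, hence $Q$ is diagonal with entries squaring to $1$, and since those entries are ratios of the positive diagonal entries of $U_1$ and $U_2$ they must all equal $1$, forcing $U_1 = U_2$. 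The only steps you assert without detail (that products and inverses of upper triangular matrices are upper triangular, and that an orthogonal upper triangular matrix is diagonal with entries $\pm 1$) are routine, so the proposal stands as a valid self-contained proof of a statement the paper leaves to the literature.
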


We are ready now to explicit a tridiagonalization of the matrix $A$
with positive entries on the sub- and super-diagonals.

\begin{prop}
\label{prop-tridiag1}
Let $A$ be a MRMT matrix such that $(A,B)$ is controllable. 
Let $Q$ be the orthogonal matrix given by the Lanczos algorithm
applied to $\Delta $ with $q_{1}=A(2:n,1)/||A(2:n,1)||$. Let $U$ be
the upper triangular matrix with positive diagonal entries
given by the Cholesky decomposition of the symmetric matrix $Q'\tilde
V Q$.
Then the matrix
\[
T=\left[\begin{array}{cc}
1 & 0\\
0 & QU^{-1}
\end{array}\right] 
\]
is such that $T^{-1}AT$ is symmetric tridiagonal with positive entries on
the sub- (or super-)diagonal.
\end{prop}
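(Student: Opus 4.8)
The plan is to verify directly that the proposed matrix $T$ tridiagonalizes $A$ by combining the two ingredients built into its definition: the Lanczos orthogonalization $Q$ of the diagonal block $\Delta$, and the Cholesky factor $U$ of the Gram matrix $Q'\tilde V Q$. The key conceptual point is that we are not tridiagonalizing $A$ with respect to the ordinary Euclidean inner product, but with respect to the inner product induced by $V$, since it is $VA$ (not $A$ itself) that is symmetric. First I would record the block structure. Because $T$ is block-diagonal with blocks $1$ and $\tilde T := QU^{-1}$, the conjugate $T^{-1}AT$ has the form
\[
T^{-1}AT=\left[\begin{array}{cc}
A_{11} & A(1,2:n)\tilde T\\[2mm]
\tilde T^{-1}A(2:n,1) & \tilde T^{-1}\Delta\tilde T
\end{array}\right].
\]
So the proof reduces to three verifications: that $\tilde T^{-1}\Delta\tilde T$ is symmetric tridiagonal with positive sub-diagonal; that the off-diagonal coupling $(1,2)$ and $(2,1)$ entries are matched correctly (giving symmetry across the first row/column) and that exactly one new sub-diagonal entry is created while the rest of the first column vanishes; and that all the relevant sub-/super-diagonal entries are \emph{positive}.

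\textbf{The core computation.}

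Next I would handle the interior block $\tilde T^{-1}\Delta\tilde T = U\,Q^{-1}\Delta Q\,U^{-1} = U\,(Q'\Delta Q)\,U^{-1}$, using $Q^{-1}=Q'$ since $Q$ is orthogonal. By Lemma~\ref{lemLanczos}, $Q'\Delta Q$ is symmetric tridiagonal with positive sub-diagonal entries; write it as $J$. Conjugating the tridiagonal $J$ by the upper-triangular $U$ preserves the tridiagonal shape below the diagonal but a priori destroys symmetry, so the real content is to show that $U J U^{-1}$ is again symmetric. The mechanism is that $U'U = Q'\tilde V Q$, i.e. $U^{-\prime}U^{-1} = (Q'\tilde V Q)^{-1}$, and that $J$ is self-adjoint for the inner product $Q'\tilde V Q$ because $\Delta$ is self-adjoint for $\tilde V$ (as $\tilde V\Delta = \tilde A'\tilde V$-type symmetry coming from $VA$ symmetric). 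Concretely, $(UJU^{-1})' = U^{-\prime}J'U' = U^{-\prime}(Q'\tilde V Q)J(Q'\tilde V Q)^{-1}U'$, and using $J(Q'\tilde V Q)^{-1}=(Q'\tilde V Q)^{-1}J'$ together with $U'U=Q'\tilde V Q$ should collapse this back to $UJU^{-1}$. This is the step I expect to be the main obstacle: one must chase the transposes carefully and use exactly the identity $U'U=Q'\tilde V Q$ to convert the $\tilde V$-self-adjointness of $\Delta$ into ordinary symmetry of $UJU^{-1}$.

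\textbf{Off-diagonal coupling and positivity.}

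For the first column $\tilde T^{-1}A(2:n,1) = U Q'A(2:n,1)$, I would use that $q_1 = A(2:n,1)/\|A(2:n,1)\|$ is the seed of the Lanczos recursion, so $Q'A(2:n,1) = \|A(2:n,1)\|\,Q'q_1 = \|A(2:n,1)\|\,e_1$; since $U$ is upper triangular, $U e_1$ is supported on its first entry alone and equals (positive diagonal entry)$\times\|A(2:n,1)\|\,e_1$, so only the single $(2,1)$ entry survives and it is positive. This gives the correct ``series'' shape, with the mobile zone attached at one end. For the matching first row $A(1,2:n)\tilde T$, I would invoke symmetry of $VA$: since $V_{11}A(1,2:n) = A(2:n,1)'\tilde V$, the row and column are tied together, and one checks that after conjugation by the $V$-respecting transformation the $(1,2)$ and $(2,1)$ entries agree and are positive (this is precisely where the Cholesky factor of $Q'\tilde V Q$ enters to symmetrize the first coupling as well). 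Finally, positivity of the interior sub-diagonal follows from Proposition~\ref{Lanczos_conv} (the $\beta_i>0$) once symmetry is established, since conjugation by $U$ with positive diagonal preserves the signs on the sub-diagonal. Assembling these verifications shows $T^{-1}AT$ is symmetric tridiagonal with positive sub-/super-diagonal, completing the proof.
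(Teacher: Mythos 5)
Your proof is correct, and it reaches tridiagonality by a genuinely different route from the paper. The paper never argues directly on the blocks of $T^{-1}AT$: it first shows that $P'AP$ (where $P$ is block-diagonal with blocks $1$ and $Q$) is upper Hessenberg with positive sub-diagonal, deduces from this that $T^{-1}{\cal C}_{A,B}=WP'{\cal C}_{A,B}$ is upper triangular with positive diagonal, and then invokes the Golub--K\aa{}gstr\"om--Van Dooren criterion (Proposition \ref{PropGolub}) to conclude that $T^{-1}AT$ is tridiagonal with positive sub-diagonal; symmetry is proved afterwards by exactly the transpose manipulations you sketch (symmetry of $VA$, the identity $U'U=Q'\tilde V Q$, and $\tilde V^{-1}\Delta\tilde V=\Delta$). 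You bypass Proposition \ref{PropGolub} entirely: upper-triangular conjugation keeps $UJU^{-1}$ (with $J=Q'\Delta Q$) upper Hessenberg; the Lanczos seed makes the lower-left column equal to $||A(2:n,1)||\,U_{11}$ times the first basis vector; $J$ commutes with the Gram matrix $G=Q'\tilde VQ=U'U$ (both $\Delta$ and $\tilde V$ being diagonal), so $(UJU^{-1})'=UG^{-1}JGU^{-1}=UJU^{-1}$; and symmetry combined with the Hessenberg shape forces tridiagonality, with sub-diagonal entries $(U_{i+1,i+1}/U_{ii})\beta_{i}>0$. Your route is more self-contained and in one respect tighter: the ``if'' direction of Proposition \ref{PropGolub} requires, in addition to the condition on ${\cal C}_{A,B}$, that ${\cal O}_{A,C}T$ be lower triangular with non-null diagonal, a condition the paper does not verify explicitly (it can be recovered from the duality $V_{1}{\cal O}_{A,C}'=V{\cal C}_{A,B}$ of Lemma \ref{lemmacontrollable}, but this is left implicit); your argument needs no such external criterion and makes transparent that controllability enters only through the non-breakdown of the Lanczos algorithm (Lemma \ref{lemLanczos}). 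What the paper's route buys is that the positivity and the exact pattern of the sub-diagonal come packaged with the cited structure theorem, whereas you need the additional (easy but necessary) observation that conjugation by an upper-triangular matrix with positive diagonal rescales sub-diagonal entries by the positive factors $U_{i+1,i+1}/U_{ii}$.
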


\begin{proof}
Lemma \ref{lemLanczos} provides the existence of the matrix $Q$ such that
$Q'\Delta Q$ is tridiagonal with positive terms on the sub- and super-diagonal.
For convenience, we define the matrices
\[
P=\left[\begin{array}{cc}
1 & 0\\
0 & Q
\end{array}\right] \quad \mbox{and} \quad
W=\left[\begin{array}{cc}
1 & 0\\
0 & U
\end{array}\right] .
\]
Clearly, $P$ is orthogonal, $W$ is upper triangular with positive
diagonal, and one has $T=PW^{-1}$. Consider the matrix
\[
P'AP=\left[\begin{array}{cc}
A_{11} & A(1,2:n)Q\\[2mm]
Q'A(2:n,1) & Q'\Delta Q
\end{array}\right] .
\]
For the particular choice of the first column of $Q$, one has
\[
Q'A(2:n,1)=\frac{1}{||A(2:n,1)||}\left[\begin{array}{c}
1\\0\\\vdots\\0
\end{array}\right]
\]
and $Q'\Delta Q$ is triangular with positive sub-diagonal. Therefore,
$P'AP$ is an upper Hessenberg matrix with positive entries on its
sub-diagonal. Consider then 
\[
\begin{array}{lll}
P'{\cal C}_{A,B} & = & P'\left[\begin{array}{cccc}B & AB & A^2B & \cdots 
\end{array}\right]\\[2mm]
& = & \left[\begin{array}{cccc}P'B & (P'AP)P'B & (P'A^2P)P'B & \cdots 
\end{array}\right]
\end{array}
\]
Notice that one has $P'B=B$ and obtains recursively
\[
P'B=\left[\begin{array}{c} h_{1}\\ 0 \\ \vdots\\ \vdots \\0 
\end{array}\right], \quad
(P'AP)B=\left[\begin{array}{c} \star\\ h_{2}\\ 0\\ \vdots \\ \vdots \\0 
\end{array}\right], \quad
(P'A^2P)B=\left[\begin{array}{c} \star\\ \star\\ h_{3}\\ 0\\ \vdots \\0 
\end{array}\right], \quad \cdots
\]
where the number $h_{i}$ are positive.
Therefore, the matrix $P'{\cal C}_{A,B}$ is upper triangular with positive
diagonal, as the matrix $W$. Then $T^{-1}{\cal C}_{A,B}=WP'{\cal C}_{A,B}$ is also
upper triangular with positive entries on its diagonal.
Proposition \ref{PropGolub} implies that $T^{-1}AT$ is tridiagonal with
  positive entries on its sub-diagonal. Let us show that $T^{-1}AT$ is also
  symmetric. One has
\[
T^{-1}AT=\left[\begin{array}{cc}
A_{11} & A(1,2:n)QU^{-1}\\[2mm]
UQ'A(2:n,1) & UQ'\Delta QU^{-1}
\end{array}\right]
\]
As the matrix $VA$ is symmetric by Assumption \ref{H1}, one can write
\[
\begin{array}{lll}
\left(A(1,2:n)QU^{-1}\right)' & = & \frac{1}{V_{1}}(U^{-1})'Q'\tilde
VA(2:n,1)\\
& = & \frac{1}{V_{1}} (U^{-1})'U'UQ'A(2:n,1)\\
& = & \frac{1}{V_{1}}UQ'A(2:n,1)
\end{array}
\]
and as we have chosen $V_{1}=1$ we obtain
$\left(A(1,2:n)QU^{-1}\right)'=UQ'A(2:n,1)$. Consider now the
sub-matrix $UQ'\Delta QU^{-1}$.
Notice first that the decomposition
$Q'\tilde VQ=U'U$ implies the equalities $U'=Q'\tilde VQU^{-1}$ and
$(U^{-1})'=UQ'\tilde V^{-1}Q$. Then on can write
\[
\begin{array}{ccc}
\left(UQ'\Delta QU^{-1}\right)' & = & (U^{-1})'Q'\Delta QU'\\
& = & (UQ'\tilde V^{-1}Q)Q'\Delta Q(Q'\tilde VQU^{-1})\\
& = & UQ'\tilde V^{-1}\Delta\tilde VQU^{-1}\\
& = & UQ'\Delta QU^{-1}
\end{array}
\]
\end{proof}

The matrix $T$ provided by Proposition \ref{prop-tridiag1} possesses the following property.

\begin{prop}
\label{propX}
The vector $X=T^{-1}\one$, where the matrix $T$ is provided by
Proposition \ref{prop-tridiag1}, is positive.
\end{prop}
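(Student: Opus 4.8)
The plan is to recognize the vector $X=T^{-1}\one$ as the equilibrium of the transformed realization and then to invoke the inverse-positivity of an irreducible Metzler Hurwitz matrix.

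First I would set $\hat A=T^{-1}AT$ and compute $\hat AX$ directly. Since the first column of $T$ is $B=[1,0,\dots,0]^t$, one has $T^{-1}B=B$, and Assumptions~\ref{H1} give $A\one=-B$ (cf. Lemma~\ref{lemmaAinversible}). Therefore
\[
\hat AX=T^{-1}AT\,T^{-1}\one=T^{-1}A\one=-T^{-1}B=-B,
\]
so that $X=-\hat A^{-1}B$. In other words, $X$ is exactly the equilibrium of the equivalent tridiagonal realization $(\hat A,B,C)$ for the constant input $u=1$, and it suffices to show that this equilibrium is strictly positive.

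Next I would collect the structural facts about $\hat A$. By Proposition~\ref{prop-tridiag1}, $\hat A$ is symmetric tridiagonal with strictly positive sub- and super-diagonal entries and zeros in all remaining off-diagonal positions; hence every off-diagonal entry of $\hat A$ is non-negative, i.e. $\hat A$ is Metzler, and its associated graph is the path $1-2-\cdots-n$, which is connected, so $\hat A$ is irreducible. Moreover $\hat A$ is similar to $A$, which is a MRMT matrix and is therefore Hurwitz; consequently $\hat A$ is Hurwitz as well.

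Then the conclusion is immediate: a Metzler Hurwitz matrix $\hat A$ is such that $-\hat A$ is a nonsingular M-matrix, and irreducibility promotes the standard non-negativity of $(-\hat A)^{-1}$ to entrywise strict positivity (Perron--Frobenius applied to the Neumann series of $-\hat A$, cf. \cite{BP94}). Since $B=[1,0,\dots,0]^t$, the vector $X=-\hat A^{-1}B$ is the first column of the entrywise-positive matrix $-\hat A^{-1}$, hence every entry of $X$ is strictly positive. The only point requiring genuine care is the strictness in $(-\hat A)^{-1}>0$ rather than merely $(-\hat A)^{-1}\ge 0$; this is precisely where the irreducibility of $\hat A$ (equivalently, the positivity of all sub- and super-diagonal entries supplied by Proposition~\ref{prop-tridiag1}) is indispensable.
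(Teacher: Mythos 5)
Your proof is correct, but it takes a genuinely different route from the paper. You interpret $X=-\hat A^{-1}B$ as the steady state of the transformed realization under unit input and then invoke inverse-positivity of the irreducible nonsingular M-matrix $-\hat A$: this requires knowing that $\hat A$ is Hurwitz, which you correctly obtain from similarity with the MRMT matrix $A$ and the paper's earlier lemma that MRMT matrices are Hurwitz. The paper instead avoids any spectral (Hurwitz) or invertibility input: it shifts and rescales to form the non-negative matrix $I+\frac{1}{\gamma}(T^{-1}AT+BB')$, observes that the identity $A\one=-B$ makes $I+\frac{1}{\gamma}(A+BB')$ row-stochastic with Perron root $1$, shows via $T^{-1}B=B$ that $X$ satisfies the eigenvector equation for that same eigenvalue $1$ of the similar irreducible non-negative matrix, and concludes by Perron--Frobenius that $X$ has constant sign, fixed to be positive since its first entry equals $1$. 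Your approach buys directness and a clean structural interpretation ($X$ is the first column of the entrywise-positive matrix $(-\hat A)^{-1}$), at the cost of importing the Hurwitz lemma; the paper's approach is more self-contained at this point of the development, needing only the algebraic identity $A\one=-B$, Metzler structure, and irreducibility. One small point to tighten in your write-up: the strict positivity of $(-\hat A)^{-1}$ for an irreducible nonsingular M-matrix is indeed standard (it follows from the Neumann series $(-\hat A)^{-1}=\sum_{k\ge 0}N^k/s^{k+1}$ with $-\hat A=sI-N$, $N\ge 0$ irreducible, $s>\rho(N)$, together with $(I+N)^{n-1}>0$), and it is worth stating this decomposition explicitly rather than gesturing at ``Perron--Frobenius applied to the Neumann series,'' since the Hurwitz property is exactly what guarantees $s>\rho(N)$ and hence convergence.
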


\begin{proof}
The matrices $A+BB'$ and $T^{-1}AT+BB'$ have non-negative entries
outside their main diagonals. So there exists a number $\gamma>0$ such
that $I+\frac{1}{\gamma}(A+BB')$ and $I+\frac{1}{\gamma}(T^{-1}AT+BB')$
are non-negative matrices.

By Assumption \ref{H1}, one has $A\one=-B$, which implies
 the property
\[
\left(I+\frac{1}{\gamma}(A+BB')\right)\one=\one \ .
\]
Thus $I+\frac{1}{\gamma}(A+BB')$ is a stochastic matrix, and we know
  that its maximal eigenvalue is $1$ (see for instance \cite[Th
  5.3]{BP94}).
As $I+\frac{1}{\gamma}(A+BB')$ and $I+\frac{1}{\gamma}(T^{-1}AT+BB')$ are
similar:
\[
T^{-1}\left(I+\frac{1}{\gamma}(T^{-1}AT+BB')\right)T=I+\frac{1}{\gamma}(A+BB') ,
\]
the maximal eigenvalue of $I+\frac{1}{\gamma}(T^{-1}AT+BB')$ is also $1$. Furthermore, as $A+BB'$ is irreducible by
Assumption \ref{H1}, $I+\frac{1}{\gamma}(T^{-1}AT+BB')$ is also
irreducible. The property $A\one=-B$ implies
\[
\left(I+\frac{1}{\gamma}(T^{-1}AT+BB')\right)X
=X+\frac{1}{\gamma}(T^{-1}A\one+B)=X+\frac{1}{\gamma}(-T^{-1}B+B)=X .
\]
So $X$ is an eigenvector of $I+\frac{1}{\gamma}(T^{-1}AT+BB')$ for its maximal
eigenvalue $1$. Finally, notice that $X=T^{-1}\one$ implies that the first entry of $X$
is equal to $1$. Then, by Perron-Frobenius Theorem (for non-negative
irreducible matrices, see for instance \cite[Th 1.4]{BP94})), we conclude that $X$ is a positive vector.
\end{proof}

We give now our main result concerning the MINC equivalence.

\begin{prop}
\label{propMINCequiv}
Let $A$ be a MRMT matrix such that $(A,B)$ is controllable and
$R=T\diag(T^{-1}\one)$, where $T$ is provided by Proposition
\ref{prop-tridiag1}. Then $(R^{-1}AR,B,C)$ is an equivalent
representation where $R^{-1}AR$ is a MINC matrix.
\end{prop}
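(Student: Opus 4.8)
The plan is to establish the two things that ``$(R^{-1}AR,B,C)$ is an equivalent MINC representation'' requires: first, that this triplet has the same input-output operator as $(A,B,C)$; and second, that $\hat A:=R^{-1}AR$ is tridiagonal and again satisfies Assumptions~\ref{H1}. Throughout I write $X=T^{-1}\one$ and use that $X$ is positive (Proposition~\ref{propX}) with first entry $X_1=1$, so that $\diag(X)$ is invertible and $R=T\diag(X)$ is invertible with $R^{-1}=\diag(X)^{-1}T^{-1}$.

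For the input-output equivalence I would first recall from Proposition~\ref{prop-tridiag1} the block-diagonal form of $T$, with upper-left block $1$ and lower-right block $QU^{-1}$; this gives $T^{-1}B=B$ and $CT=C$. Since $X_1=1$, left/right multiplication by $\diag(X)^{\pm1}$ fixes the first coordinate, whence $R^{-1}B=\diag(X)^{-1}B=B$ and $CR=C\diag(X)=C$. Thus $(R^{-1}AR,R^{-1}B,CR)=(R^{-1}AR,B,C)$ is obtained from $(A,B,C)$ by a state-space similarity leaving $B$ and $C$ unchanged, so the two triplets share the same transfer operator $\mathcal F$. For tridiagonality I would write $\hat A=\diag(X)^{-1}(T^{-1}AT)\diag(X)$; since $H:=T^{-1}AT$ is symmetric tridiagonal by Proposition~\ref{prop-tridiag1} and a two-sided diagonal scaling creates no new nonzero entries, $\hat A$ is tridiagonal with off-diagonal entry $\hat A_{ij}=(X_j/X_i)H_{ij}$. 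As $X>0$ and the sub/super-diagonal of $H$ is positive, these entries are positive on the sub/super-diagonal and zero elsewhere, so $\hat A$ is in particular Metzler.

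The substantive step is to exhibit the volume and exchange matrices witnessing Assumptions~\ref{H1} for $\hat A$. The key computation is that the congruence $T'VT$ collapses to the identity: with $V_1=1$, lower block $\tilde V$, and the Cholesky identity $Q'\tilde VQ=U'U$ from Proposition~\ref{prop-tridiag1}, the lower block of $T'VT$ equals $(U^{-1})'(U'U)U^{-1}=I$, so $T'VT=I$. Hence $R'VR=\diag(X)\,T'VT\,\diag(X)=\diag(X)^2$ is positive diagonal; I set $\hat V:=\diag(X)^2$ and $\hat M:=-\hat V(\hat A+BB')$, so that $\hat A=-BB'-\hat V^{-1}\hat M$ by construction. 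Symmetry of $\hat M$ then follows because $\hat V\hat A=R'VR\,R^{-1}AR=R'(VA)R$ is symmetric (the matrix $VA$ being symmetric under Assumptions~\ref{H1}) while $\hat V BB'=X_1^2\,BB'=BB'$ is symmetric.

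It remains to check properties i--iv for $\hat M$. Property~iv reads $\hat M\one=0$: since $\hat A\one=\diag(X)^{-1}HX=\diag(X)^{-1}T^{-1}A\one=-\diag(X)^{-1}B=-B$ and $BB'\one=B$, one gets $\hat M\one=-\hat V(\hat A\one+BB'\one)=0$. For $i\ne j$, $\hat M_{ij}=-X_i^2\hat A_{ij}\le 0$ (Property~iii), the off-diagonal entries of $\hat A$ being non-negative. Property~i holds because $X_i>0$ gives $\hat M_{ij}=0\iff\hat A_{ij}=0$, so $\hat M$ inherits the irreducible tridiagonal (path) pattern of $\hat A$. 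Finally Property~ii, $\hat M_{ii}>0$, follows from the other three: $\hat M_{ii}=-\sum_{j\ne i}\hat M_{ij}$ and irreducibility forces at least one strictly negative off-diagonal entry in each row. Thus $\hat A$ satisfies Assumptions~\ref{H1} and is tridiagonal, i.e. is a MINC matrix. The main obstacle is the identity $T'VT=I$: everything downstream (diagonality of $\hat V$, hence symmetry of $\hat M$) hinges on the Cholesky normalization baked into $T$ making the volume congruence collapse exactly to a diagonal, after which the rescaling by $\diag(X)$ is precisely what restores mass conservation $\hat A\one=-B$.
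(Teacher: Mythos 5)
Your proof is correct and follows essentially the same route as the paper's: the same similarity $R=T\diag(T^{-1}\one)$, the same witnesses $\hat V=\diag(X)^2$ and $\hat M=-\hat V(\hat A+BB')$, tridiagonality by sign-preserving diagonal scaling of the symmetric tridiagonal $T^{-1}AT$, and mass conservation from $\hat A\one=-B$. Your two small deviations --- deriving symmetry of $\hat M$ from the identity $T'VT=I$ (so that $\hat V=R'VR$ and $\hat V\hat A=R'(VA)R$) instead of directly invoking the symmetry of $T^{-1}AT$ established in Proposition \ref{prop-tridiag1}, and verifying irreducibility and diagonal positivity explicitly via the path structure of the tridiagonal pattern --- are refinements rather than a different method, and are if anything slightly more careful than the paper's appeal to irreducibility being preserved under similarity.
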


\begin{proof}
Let $X=T^{-1}\one$ and $\bar A=R^{-1}AR$. Define $\bar V=\diag(X)^2$
and $\bar M=-\bar V(\bar A+BB')$. As $A+BB'$ is irreducible by
Assumption \ref{H1}, the similar matrix $\bar A+BB'$ is also
irreducible, as well as $\bar M$ because $V$ is a diagonal invertible matrix.

By Proposition \ref{prop-tridiag1}, $T^{-1}AT$ is a symmetric tridiagonal matrix with
positive terms on the sub- or super-diagonal. By Proposition \ref{propX},
$X$ is a positive vector, and thus $\bar
A=\diag(X)^{-1}(T^{-1}AT)\diag(X)$ is also a
tridiagonal matrix with the same signs outside the diagonal. Thus,
$\bar M$ is a tridiagonal matrix with negative terms on sub- or
super-diagonal. Moreover, one has
\[
\bar M=-\diag(X)^2(\diag(X)^{-1}T^{-1}AT\diag(X)+BB')
=-\diag(X)T^{-1}AT\diag(X)-X_{1}^2.BB' .
\]
where $X_{1}=1$.
The matrix $\bar M$ is thus symmetric. One has
\[
\begin{array}{lll}
\bar M\one & = & -\diag(X)T^{-1}AT\diag(X)\one-BB'\one\\
& = & -\diag(X)T^{-1}ATX-B\\
& = &  -\diag(X)T^{-1}AT(T^{-1}\one)-B\\
& = &  -\diag(X)T^{-1}A\one-B\\
& = &  \diag(X)T^{-1}B-B\\
& = & \diag(X)B-B\\
& = & 0
\end{array}
\]
The matrix $\bar M$ thus fulfills Assumption \ref{H1} and is
tridiagonal: $\bar A$ is then a MINC matrix. Finally, one has $\bar B=R^{-1}B=B$
and $\bar C=CR=C$.
\end{proof}

Finally, Theorem \ref{ThMRMT} and Proposition \ref{propMINCequiv} lead to the
following result.

\begin{theorem}
\label{ThMINC}
Any minimal representation $(A,B,C)$ that fulfills Assumptions \ref{H1} is
equivalent to a MINC structure.
\end{theorem}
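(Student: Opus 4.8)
The plan is to compose the two equivalences already established, using minimality to supply the controllability hypotheses needed along the way. Since $(A,B,C)$ is a minimal representation, Kalman's Theorem tells us that $(A,B)$ is controllable (and, by Lemma \ref{lemmacontrollable}, $(A,C)$ is observable as well). Thus $(A,B,C)$ satisfies Assumptions \ref{H1} together with the controllability condition required throughout Section \ref{secMRMT}, so Theorem \ref{ThMRMT} applies and yields an invertible matrix $R_{1}$ (the matrix $R$ of Proposition \ref{mainprop}) such that $A_{M}:=R_{1}^{-1}AR_{1}$ is a MRMT matrix. Because $R_{1}$ is block-diagonal with first block equal to $1$, one has $R_{1}^{-1}B=B$ and $CR_{1}=C$, so the equivalent MRMT representation is $(A_{M},B,C)$; moreover $A_{M}$ again fulfills Assumptions \ref{H1}, as was checked inside the proof of Proposition \ref{mainprop}.

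The next step is to feed this MRMT representation into Proposition \ref{propMINCequiv}, which requires $(A_{M},B)$ to be controllable. This is automatic, since similarity preserves the rank of the controllability matrix:
\[
{\cal C}_{A_{M},B}={\cal C}_{R_{1}^{-1}AR_{1},\,R_{1}^{-1}B}=R_{1}^{-1}{\cal C}_{A,B},
\]
whence $\rk({\cal C}_{A_{M},B})=\rk({\cal C}_{A,B})=n$ because $R_{1}$ is invertible. Proposition \ref{propMINCequiv} then provides $R_{2}=T\diag(T^{-1}\one)$, with $T$ given by Proposition \ref{prop-tridiag1}, such that $\bar A:=R_{2}^{-1}A_{M}R_{2}$ is a MINC matrix and $(\bar A,B,C)$ is an equivalent representation of $(A_{M},B,C)$.

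Finally, setting $R=R_{1}R_{2}$ we obtain $R^{-1}AR=\bar A$ MINC with $R^{-1}B=B$ and $CR=C$; since a similarity transformation leaves the input-output operator ${\cal F}_{A,B,C}$ unchanged, equivalence is transitive and $(A,B,C)$ is equivalent to the MINC representation $(\bar A,B,C)$, which is the claim. I expect no real difficulty in this argument: the only point that needs care is transporting the controllability hypothesis across the MRMT step, which is exactly the similarity-invariance of the rank of the controllability matrix displayed above; the substantive work has already been carried out in Theorem \ref{ThMRMT} and Proposition \ref{propMINCequiv}.
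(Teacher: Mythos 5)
Your proof is correct and follows essentially the same route as the paper, which obtains Theorem \ref{ThMINC} precisely by composing Theorem \ref{ThMRMT} with Proposition \ref{propMINCequiv}. You have merely made explicit the details the paper leaves implicit, namely that $R_{1}^{-1}B=B$, $CR_{1}=C$, and that controllability transfers to the MRMT representation via ${\cal C}_{A_{M},B}=R_{1}^{-1}{\cal C}_{A,B}$.
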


\section{Examples and discussion}
\label{section-minimal}

Theorems \ref{ThMRMT} and \ref{ThMINC} show that whatever is the network
structure, it is always possible to represent its input-output map
with either a MRMT {\em star} or a
MINC {\em series} structure.
But from a state-space representation (\ref{input-output}), it
requires the system to be controllable (or minimal).
We begin by an example that illustrates the
necessity of  the controllability assumption to make the equivalence
constructions given in Sections \ref{secMRMT}, \ref{secMINC} work.

\subsection{Example 1}
Consider a network of four reservoirs of volumes (see Fig. \ref{fig-exemple})
\[
V_{1}=1 \ , \; V_{2}=1 \ , \; V_{3}=2 \ , \; V_{4}=3
\]
with the diffusive exchange rate coefficients:
\[
d_{12}=1 \ , \; d_{13}=2 \ , \; d_{14}=3 \ , \; d_{23}=3 \ , \;
d_{24}=3
\]
which lead to the dynamics
\[
\begin{array}{lll}
\dot S_{1} & = & -7S_{1}+S_{2}+2S_{3}+3S_{4}+u\\
\dot S_{2} & = & S_{1}-7S_{2}+3S_{3}+3S_{4}\\
\dot S_{3} & = & S_{1}+\frac{3}{2}S_{2}-\frac{5}{2}S_{3}\\
\dot S_{4} & = & S_{1}+S_{2}-2S_{4}
\end{array}
\]
with the matrix
\[
A=\left[\begin{array}{rrrr}
-7 & 1 & 2 & 3\\
1 & -7 & 3 & 3\\
1 & \frac{3}{2} & -\frac{5}{2} & 0\\
1 & 1 & 0 & -2
\end{array}\right]
\]
\begin{figure}[h]
\begin{center}
\includegraphics[width=8cm]{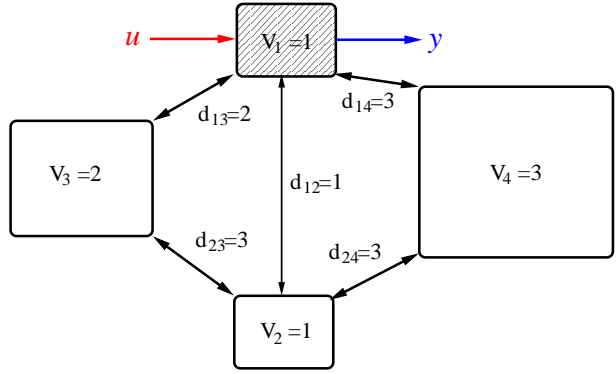}
\caption{\label{fig-exemple}Structure of the example}
\end{center}
\end{figure}
At the first look, this structure does not exhibit any special
property or symmetry that could make believe that it is non minimal.
By construction one has $\tilde A\tilde\one=-\tilde\one$ but the particular
matrix $A$ that we consider satisfies
$A(2:4,1)=\tilde\one$. Consequently the vector $A(2:4,1)$ is an
eigenvector of the matrix $\tilde A$ for the eigenvalue $-1$.
If the multiplicity of $-1$ was more than $1$, then $\lambda=-9.5$ should be an
eigenvalue of $\tilde A$, as the trace of $\tilde A$ is $-11.5$.
But an eigenvector $X$ of $\tilde A$ fulfills
\[
\begin{array}{rll}
-7X_{1}+3X_{2}+3X_{3} & = & \lambda X_{1}\\
1.5X_{1}-2.5X_{2} & = & \lambda X_{2}\\
X_{1}-2X_{3} & = & \lambda X_{3}
\end{array}
\]
one should have
\[
(\lambda+7)X_{1}-3X_{2}-3X_{3}=0 \mbox{ with }
X_{2}=\frac{1.5}{\lambda+2.5}X_{1} \ , \;
X_{3}=\frac{1}{\lambda+2}X_{1} \quad (\mbox{and } X_{1}\neq 0)
\]
which is not possible for $\lambda=-9.5$. Then, any matrix $P$ that
diagonalizes $\tilde A$ should have one column proportional to the
eigenvector $\tilde \one$, which amounts to have the vector
$P^{-1}\tilde\one$ with exactly one non-null entry. 
Thus one cannot apply Proposition \ref{mainprop} and transform the system in a equivalent MRMT structure of the same dimension.\\

One can check that the pair $(A,B)$ is indeed non controllable, even though
the matrix $\tilde A$ has distinct eigenvalues, as one has
\[
AB=\left[\begin{array}{r}-7\\1\\1\\1\end{array}\right] \ , \;
A^2B=\left[\begin{array}{r}55\\-8\\-8\\-8\end{array}\right] =-B-8AB
\]
from which one deduce $\rk({\cal C}_{A,B})=2$. Indeed, the system admits a
minimal representation of dimension $2$ that can be found by gathering
the immobile zones in one of volume $\bar V=V_{2}+V_{3}+V_{4}=6$ and
solute concentration
\[
\bar
S=\frac{V_{2}S_{2}+V_{3}S_{3}+V_{4}S_{4}}{\bar V}=\frac{S_{2}+2S_{3}+3S_{4}}{6}
\]
One can check that variables $(S_{1},\bar S)$ are solutions
of the dynamics
\[
\begin{array}{lll}
\dot S_{1} & = & -7S_{1}+6\bar S+u\\
\dot{\bar S} & = & S_{1}-\bar S
\end{array}
\]
that gives an equivalent representation (in MRMT or MINC form) with a
diffusive exchange rate $\bar d=6$ (see Fig. \ref{fig-exemple-simplified}).
\begin{figure}[h]
\begin{center}
\includegraphics[width=4cm]{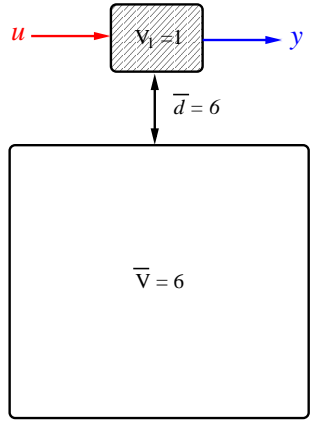}
\caption{\label{fig-exemple-simplified}Simplified equivalent structure of the example}
\end{center}
\end{figure}

\subsection{Example 2}
Consider a network with one mobile zone and four immobile zones of
identical volumes $V_{i}=1$ ($i=1\cdots 5$), as depicted on Figure
\ref{figEx5} with the following diffusive exchange rates
\[
d_{12}=1 \ , \; d_{13}=2 \ , \; d_{34}=1 \ , \; d_{35}=3 \ , \;
d_{45}=1 \ .
\]
\begin{figure}[h]
\begin{center}
\includegraphics[width=6cm]{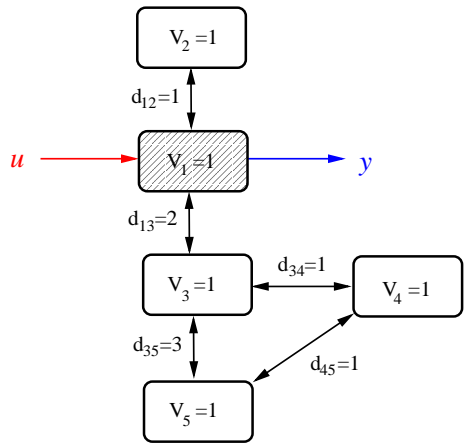}
\caption{\label{figEx5}Example of a network with one mobile and four immobile zones}
\end{center}
\end{figure}

The structure of this network is neither MRMT nor MINC, and its
corresponding matrix $A$ is
\[
A=\left[\begin{array}{ccccc}
   -3 &     1 &    1 &     0 &     0 \\
    1 &   - 1 &     0 &     0 &     0\\  
    1 &     0 &   -3 &     1 &     1\\
    0 &     0 &     1 &   -2 &     1\\  
    0 &    0 &    1 &     1 &   -2
\end{array}\right] .
\]
One can easily compute the controllability matrix
\[
{\cal C}_{A,B}=\left[\begin{array}{ccccc}
    1 & -4 &  21 & -129 &  906 \\ 
    0 &  1 & -5 &   26 & -155 \\
    0 &  2 & -20 &  182 & -1614 \\
    0 &  0 &  2 & -18 &   136 \\
    0 &  0 &  6 & -82 &   856
\end{array}\right]
\]
and check that it is full rank (computing for instance $det({\cal C}_{A,B})=-896$). Then, the
constructions of Sections \ref{secMRMT} and \ref{secMINC} give the
following equivalent MRMT and MINC matrices:
\[
A_{MRMT}=\left[\begin{array}{ccccc}
 -4 & 0.3256267 & 0.1692779  & 1 &  1.5050954 \\
    8.1710298  & -8.1710298  &  0 &  0 &  0 \\
    3.3115831 &   0 &       -3.3115831 &   0 &  0\\
    1 &         0 &         0 &       - 1 &  0 \\    
    0.5173871  &  0 &         0 &         0 & -0.5173871  
\end{array}\right] ,
\]
\[
A_{MINC}=\left[\begin{array}{ccccc}
 -4 &          3 &          0 &          0 &          0 \\       
    1.6666667  & -5 &          3.3333333  &  0 &       0 \\ 
    0 &          3.6   &     -4.1333333 &   0.5333333  & 0\\ 
    0 &          0 &    2.4666667  & -2.9207207 &   0.4540541 \\  
    0 &          0 &  0 &    0.9459459 & -0.9459459
\end{array}\right] .
\]
We have checked numerically that each matrix $A$, $A_{MRMT}$ and
$A_{MINC}$ give the same co-prime transfer function
\[
T(z)=\frac{ 14 + 47z + 45z^2 + 13z^3 + z^4}{14 + 117z + 187z^2 + 92z^3
  + 17z^4 + z^5} .
\]

Differently to the original network, the magnitude of the
values of volumes and diffusive exchange rates are significantly
different among compartments, opening the door of possible model
reduction dropping some compartments.
\begin{enumerate} 
\item[i.] For the equivalent MRMT structure, one obtains
\[
V_{1}=1 \ , \; V_{2}=0.0398514 \ , \; V_{3}=0.0511169 \ , \; V_{4}=1 \
, \; V_{5}=2.9090317 
\]
with
\[
d_{12} =0.3256267  \ , \; d_{13}=0.1692779 \ , \; d_{14}=1 , \;
d_{15}=1.5050954
\]
and notices that zones $2$ and $3$ are of relatively small volumes
(compared to the total volume of the system which is equal to $5$) and connected to the mobile zone with relatively small diffusive
parameters. Then, one may expect to have a good approximation with a reduced
MRMT model dropping zones $2$ and $3$. Keeping the volumes $V_{1}$,
$V_{4}$, $V_{5}$ with the parameters $d_{14}$, $d_{15}$, one obtains the
the 3 compartments MRMT matrix
\[
\tilde A_{MRMT}= \left[\begin{array}{ccc}
  -3.5050954  &  1 &           1.5050954 \\ 
    1 &         - 1 &           0 \\
    0 &        0.5173871  & -0.5173871
\end{array}\right]
\]
with the corresponding transfer function
\[
\tilde T_{MRMT}(z)=\frac{0.5173871 + 1.5173871z + z^2}{0.5173871 +
  4.8359736z + 5.0224825z^2 + z^3} \ .
\]
\item[ii.]
For the equivalent MINC structure, one obtains
\[
V_{1}=1 \ , \; V_{2}=1.8 \ , \; V_{3}=1.6666667 \ , \; V_{4}=0.3603604 \
, \; V_{5}=0.1729730 
\]
with
\[
d_{12} =3  \ , \; d_{23}=6 \ , \; d_{34}=0.8888889 , \;
d_{45}=0.1636231 \ .
\]
Here, one notices that the two last volumes are relatively small and
connected with relatively small diffusion terms. 
Keeping the volumes $V_{1}$,
$V_{2}$, $V_{3}$ with the parameters $d_{12}$, $d_{23}$, one obtains the
the 3 compartments MINC matrix
\[
\tilde A_{MINC}= \left[\begin{array}{ccc}
  -4 &  3 &           0 \\ 
1.6666667  & -5  &     3.3333333  \\
    0        & 3.6 & -3.6 
\end{array}\right]
\]
with the corresponding transfer function
\[
\tilde T_{MINC}(z)=\frac{6 + 8.6z + z^2}{6 + 35.4z + 12.6z^2 + z^3} \ .
\]
\end{enumerate}
The Nyquist plots of the transfer functions $T$, $\tilde T_{MRMT}$ and
$\tilde T_{MINC}$ are reported on Figure \ref{figNyquist}, showing the quality
of the approximation with only three compartments derived from the MRMT or MINC
representations. There exist many reduction methods in the literature,
but a reduction through MRMT or MINC has the advantage to obtain easily reduced
models with a physical meaning.
\begin{figure}[h]
\begin{center}
\includegraphics[width=6cm]{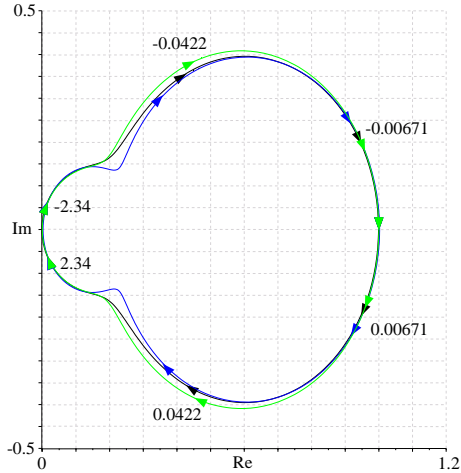}
\caption{\label{figNyquist}Nyquist diagrams (black: original system,
  blue: reduced MRMT, green: reduced MINC)}
\end{center}
\end{figure}

\begin{remark}
For a positive linear system $(A,B)$, let ${\cal A}_{0}^+(A,B)$ be the attainability set from the $0$-state with
non-negative controls. The system being positive, one has
${\cal A}_{0}^+(A,B) \subset \Rset_{+}^n$ and for any state $X \in
{\cal A}_{0}^+(A,B)$, the state $Z=R^{-1}X$ for the equivalent
MRMT or MINC structure is also non-negative, but for a state $X \in \Rset_{+}^n
\setminus {\cal A}_{0}^+(A,B)$, the equivalent state $Z=R^{-1}X$ is not
necessarily non-negative (as the coefficients of the matrix $R^{-1}$
are not necessarily non-negative).
Consequently, one can have an equivalent input-output representation
in MRMT form but with negative concentrations for such states of the
system.
Physically, this means that the compartments network has not been
filled from a substrate-free state with a control $u(\cdot)$.This
observation might be relevant from a geophysical view point.
\end{remark} 

\section{Conclusion}

We have shown that any general network structure is equivalent to a
``star'' structure (MRMT) or a ``series'' structure (MINC), that are
commonly considered in geosciences to represent soil porosity in mass
transfers. In this way, we reconcile these two different approaches, showing
that they are indeed equivalent. Practically, this means that when the
structure is unknown, or partially known, one can use equivalently
the most convenient structure to identify the parameters or use some a
priory knowledge.

In this work we have also shown the crucial role played the
controllability property of a given mass transfer structure. Although there is no particular
control issue in the input-output representations of mass transfers,
controllability is a necessary condition to obtain equivalence with
the multi-rate mass transfer (MRMT) structures of depth one, introduced by
Haggerty and Gorelick in 1995 \cite{HG95}, or the multiple interacting
 continua structure (MINC).
This condition is related to the minimal representation of linear
systems, that is not necessarily fulfilled for such structures even
for non-singular irreducible network matrices with distinct eigenvalues.

Although the objective of the present work is to show the exact
equivalence of systems, we have shown on examples that MRMT and MINC
representations could allow a simple and efficient way to obtain reduced
models with a good approximation. Further investigations about such
reduction techniques will be the matter of a coming work.

From a geosciences view point, this analysis shows the existence of
both identifiable and non-identifiable porosity structures from
input-output data. Input-output signals are typical of conservative
tracer tests where non-reactive tracers are injected in an upstream
well and analyzed in a downstream well \cite{F08}. Identifiable
structures could thus be calibrated on tracer tests \cite{A83}. The
porosity structure identified is however not unique as demonstrated on
the example in Section \ref{section-minimal}, meaning that a porosity
structure cannot be fully characterized by a tracer test. This is an
advantage rather than a drawback for this class of models as the
porosity structure should support both conservative and reactive
transport \cite{DRBH13,DSSCB09}. Reactive transport does not only
depend on the input/output concentrations but also on the
concentrations within the diffusion porosities, i.e. from the full
state of the system. 
In a broader perspective, some further characteristics of the porosity structure might be revealed by reactive tracers used in combination with conservative tracers. 

\bigskip

\noindent {\bf Acknowledgments.}

This work was developed in the framework of the DYMECOS 2 INRIA Associated team and of project BIONATURE of CIRIC INRIA CHILE, and it was partially supported by CONICYT grants REDES 130067 and 150011. The second and fourth authors were also supported by CONICYT-Chile under ACT project 10336, FONDECYT 1160204 and 1160567, BASAL project (Centro de Modelamiento Matemático, Universidad de Chile), CONICYT national doctoral grant and CONICYT PAI/ Concurso Nacional Tesis de Doctorado en la Empresa, convocatoria 2014, 781413008. 

The authors are grateful to T. Babey, D. Dochain J. Harmand, C. Casenave,
J.L. Gouz\'e and B. Cloez for fruitful discussions and insightful
ideas.

This research has been also conducted in the scope of the French ANR
project {\em Soil$\mu$3D}.

\end{document}